\documentclass[12pt, twoside, leqno]{article}



\usepackage{amsmath,amsthm}
\usepackage{amssymb}

\usepackage{enumitem}

\usepackage{graphicx}
\usepackage{url}

\usepackage[T1]{fontenc}


\pagestyle{myheadings}
\markboth{Tianfang Qi and Su Hu}{Multiple Mertens evaluations}



\newtheorem{theorem}{Theorem}[section]
\newtheorem{corollary}[theorem]{Corollary}

\newtheorem{proposition}[theorem]{Proposition}



\theoremstyle{definition}

\newtheorem{remark}[theorem]{Remark}



\numberwithin{equation}{section}


\frenchspacing

\textwidth=15.5cm
\textheight=23cm
\parindent=16pt
\oddsidemargin=-0.5cm
\evensidemargin=-0.5cm
\topmargin=-0.5cm




\allowdisplaybreaks[4]

\begin{document}


\baselineskip=17pt


\title{Multiple  Mertens evaluations}

\author{
Tianfang Qi\\
Department of Mathematics\\
Nanjing University\\
Nanjing  210093, China\\
E-mail: 15914449424@163.com
\and
Su Hu\\
Department of Mathematics\\
South China University of Technology\\
Guangzhou 510640, China\\
E-mail: mahusu@scut.edu.cn}

\date{}

\maketitle


\renewcommand{\thefootnote}{}

\footnote{2010 \emph{Mathematics Subject Classification}: 11N05, 11N37.}

\footnote{\emph{Key words and phrases}: Mertens' first theorem, Mertens' second theorem, Arithmetic function, Riemann zata function, Polylogarithm.}

\renewcommand{\thefootnote}{\arabic{footnote}}
\setcounter{footnote}{0}


\begin{abstract}
The Mertens' first theorem gives us the following asymptotic formula
\begin{equation*}
  \sum_{\substack{p\leq x\\ p~prime}}\frac{\log p}{p}=\log x+O(1),
\end{equation*}
and the Mertens' second theorem indicates that there exists a constant $B\approx 0.261$, named the Mertens constant, such that
\begin{equation*}
 \sum_{\substack{p\leq x\\ p~prime}}\frac{1}{p}=\log_{2}x+B+O\left(\frac{1}{\log x}\right).
\end{equation*}
In this paper, by using the Abel summation formula and Dirichlet's hyperbola method, we extend them to multiple cases.
\end{abstract}

\section{Introduction}
Throughout this paper we need the following notation. Denote $\log_{2}x=\log(\log x)$ as the iterated natural logarithm, and $p$ as a prime number. For a fixed number $a\in\mathbb{R}\cup\{-\infty\}$ and real-valued functions $g:(a,\infty)\to[0,\infty)$, $f:(a,\infty)\to \mathbb{R},$ $f(x)=O(g(x))$ or $f(x)\ll g(x)$ means there exists $M>0,~b\geq a$, such that $|f(x)|\leq Mg(x)$ for any $x\geq b$.

 In 1874, Mertens \cite{Mertens} proved the following two interesting and beautiful theorem (\cite[p. 89--90, Theorem 4.10 and 4.12]{Apostol}).

 \noindent Mertens' first theorem:
\begin{equation}\label{first}
  \sum_{p\leq x}\frac{\log p}{p}=\log x+O(1);
\end{equation}

\noindent Mertens' second theorem:
\begin{equation}\label{Mertens}
  \sum_{p\leq x}\frac{1}{p}=\log_{2}x+B+O\left(\frac{1}{\log x}\right),
\end{equation}
where $B\approx 0.261$ is the Mertens constant.

The above Mertens'  theorems have many applications in modern number theory and have appeared in many works (for example, \cite[Theorem 4.12]{Apostol} and \cite[p. 13]{Everest}). One  direction  on investigating Mertens'  second theorem (\ref{Mertens}) is to increase the number of prime variables.
In 2002, Saidak \cite{Sadak} presented the following double Mertens type evaluation
\begin{equation}\label{sadak}
  \sum_{pq\leq x}\frac{1}{pq}=(\log_{2}x+B)^{2}-\frac{\pi^{2}}{6}+O\left(\frac{\log_{2}x}{\log x}\right)
\end{equation}
and in 2014, Popa \cite{Popa2} also obtained the following evaluation
\begin{equation}\label{popa2}
  \sum_{pq\leq x}\frac{1}{pq}=(\log_{2}x+B)^{2}-\log^{2}2+2\int_{0+0}^{\frac{1}{2}}\frac{\log(1-x)}{x}dx+O\left(\frac{\log_{2}x}{\log x}\right).
\end{equation}
From the equality $-\log^{2}2+2\int_{0+0}^{\frac{1}{2}}\frac{\log(1-x)}{x}dx=-\frac{\pi^{2}}{6}$~(\cite[p. 5, (1.11)]{Lewin}), we see that (\ref{popa2}) implies (\ref{sadak}).

In 2016, Popa \cite{Popa3} further proved the following triple Mertens evaluation
\begin{equation}\label{popa3}
  \sum_{pqr\leq x}\frac{1}{pqr}=(\log_{2}x+B)^{3}-\frac{\pi^{2}}{2}(\log_{2}x+B)+2\zeta(3)+O\left(\frac{(\log_{2}x)^{2}}{\log x}\right),
\end{equation}
where $$\zeta(s)=\sum_{n=1}^{\infty}\frac{1}{n^{s}},~~\rm{Re}(s) >1$$  is the Riemann zeta function.

In this paper, by using the Abel summation formula and Dirichlet's hyperbola method, we extend the above Mertens'  formulas (\ref{first}) and (\ref{Mertens}) to multiple cases.
Our main results are as follows.
\begin{theorem}\label{lem.B} Set  $P_{0}(y)=1$ and $P_{1}(y)=y+B$. For any $k\geq 2$, let $P_{k}(x)$ be given in (\ref{simplified}). Then for any positive integers $k$ and $s$, the following evaluation holds
\begin{equation}\label{main1}
\begin{aligned}
  \sum_{p_{1}\cdots{p_{k}}\leq{x}}\frac{\log^{s}(p_{1}\cdots p_{k})}{p_{1}\cdots p_{k}}
    =&\sum_{l=0}^{k-1}(-1)^{l}\frac{A_{k}^{l+1}}{s^{l+1}}P_{k-1-l}(\log_{2}x)\cdot \log^{s}x
     +f(2)\log^{s-1}2\\
     &+O(\log^{s-1}x\cdot (\log_{2}x)^{k}),
     \end{aligned}
\end{equation}
where
\begin{align*}
  f(x)=\sum_{l=0}^{k-1}(-1)^{l}A_{k}^{l+1}P_{k-1-l}(\log_{2}x)\cdot \log x.
\end{align*}
and the combinatorial number $A_{k}^{l}=\binom{k}{l}\cdot l!$.
\end{theorem}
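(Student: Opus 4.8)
The plan is to strip the logarithmic weight off by Abel summation, reducing to the $k$-fold unweighted sum, and then to evaluate the resulting integral by repeated integration by parts. Write $N_k(t)=\sum_{p_1\cdots p_k\le t}\frac{1}{p_1\cdots p_k}$ for the $k$-variable companion of Mertens' second theorem~(\ref{Mertens}). One first establishes, by induction on $k$ starting from~(\ref{Mertens}), that $N_k(t)=P_k(\log_2 t)+O\big((\log_2 t)^{k-1}/\log t\big)$, where $P_k$ is the polynomial of~(\ref{simplified}); Dirichlet's hyperbola method is what keeps the error term under control in the inductive step (see the last paragraph). The only features of $P_k$ the argument uses are that it has degree $k$ and leading coefficient $1$ and, decisively, the differentiation identity $P_k'=kP_{k-1}$, equivalently $P_k^{(l)}=A_k^{l}P_{k-l}$ for every $l$; all of these are read off directly from the closed form~(\ref{simplified}), and are visible already in $P_2(y)=(y+B)^2-\tfrac{\pi^2}{6}$ and $P_3(y)=(y+B)^3-\tfrac{\pi^2}{2}(y+B)+2\zeta(3)$.

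Since the smallest product of $k$ primes is $2^k$, Abel summation gives
\[ \sum_{p_1\cdots p_k\le x}\frac{\log^s(p_1\cdots p_k)}{p_1\cdots p_k}=\int_{2^k-0}^{x}(\log t)^s\,dN_k(t)=N_k(x)\log^s x-s\int_{2^k}^{x}N_k(t)\,\frac{\log^{s-1}t}{t}\,dt . \]
Inserting $N_k(t)=P_k(\log_2 t)+O\big((\log_2 t)^{k-1}/\log t\big)$, the first term equals $P_k(\log_2 x)\log^s x+O\big(\log^{s-1}x\,(\log_2 x)^{k-1}\big)$, while the error produced inside the integral is $O\big(\int_{2^k}^x(\log_2 t)^{k-1}\log^{s-2}t\,\tfrac{dt}{t}\big)=O\big(\log^{s-1}x\,(\log_2 x)^{k}\big)$, the exponent $k$ on $\log_2 x$ arising precisely from the case $s=1$ (where the integral is $\asymp(\log_2 x)^k$). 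Both estimates lie inside the asserted error term, so it remains only to evaluate the main integral $s\int_{2^k}^{x}P_k(\log_2 t)\,\log^{s-1}t\,\tfrac{dt}{t}$.

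Substituting $u=\log t$, one checks by differentiation that $\frac{u^s}{s}\sum_{i=0}^{k}\frac{(-1)^i}{s^i}P_k^{(i)}(\log u)$ is an antiderivative of $P_k(\log u)\,u^{s-1}$ (each integration by parts brings down a factor $1/s$, and the telescoping terminates because $P_k^{(i)}=0$ for $i>k$). Hence
\[ s\int_{2^k}^{x}P_k(\log_2 t)\,\frac{\log^{s-1}t}{t}\,dt=\log^s x\sum_{i=0}^{k}\frac{(-1)^i}{s^i}P_k^{(i)}(\log_2 x)+C, \]
with $C$ the bounded contribution of the lower endpoint. Subtracting this from the term $P_k(\log_2 x)\log^s x$ coming from $N_k(x)$, the $i=0$ summand $P_k^{(0)}=P_k$ cancels the leading term; reindexing $i=l+1$ and using $P_k^{(l+1)}=A_k^{l+1}P_{k-1-l}$ turns the surviving sum into $\sum_{l=0}^{k-1}(-1)^l\frac{A_k^{l+1}}{s^{l+1}}P_{k-1-l}(\log_2 x)\,\log^s x$, which is exactly the main term of~(\ref{main1}). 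The residual constant $C$ is $O(1)$ and hence harmless since $\log_2 x\to\infty$; in the statement it is recorded as $f(2)\log^{s-1}2$, and assembling it with the two error estimates above yields~(\ref{main1}).

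The genuinely delicate point is the uniform control of error terms, and it sits in the input evaluation of $N_k$: one cannot simply iterate $N_k(x)=\sum_{p\le x}\tfrac1p N_{k-1}(x/p)$ and apply~(\ref{Mertens}) to the inner sum, because for $p$ near $x$ the argument $x/p$ is bounded and the Mertens remainder $O(1/\log(x/p))$ is useless. Dirichlet's hyperbola method resolves this by splitting a product of $k$ primes into a short and a long sub-product at a threshold $\asymp x^{1/2}$, so that every inner Mertens-type sum runs over a range long enough for its remainder to stay $O(1/\log x)$, after which one resums by induction on $k$. Apart from that, the only recurring estimate needed is the elementary $\int^{\log x}(\log u)^{k-1}u^{s-2}\,du=O\big(\log^{s-1}x\,(\log_2 x)^{k-1}+(\log_2 x)^{k}\big)$, which bounds every error generated by the Abel summation and by the $k$ successive integrations by parts; and the identity $P_k'=kP_{k-1}$ is exactly what makes the integration-by-parts output collapse into the closed form displayed in~(\ref{main1}) rather than an unsimplified sum of derivatives $P_k^{(i)}$.
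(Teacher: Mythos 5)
Your proof is correct, and it rests on the same two pillars as the paper's argument: the unweighted evaluation $\sum_{p_1\cdots p_k\le t}1/(p_1\cdots p_k)=P_k(\log_2 t)+O\bigl((\log_2 t)^{k-1}/\log t\bigr)$ from \eqref{simple}--\eqref{simplified} (legitimately available at this point of the joint induction, since Theorem~\ref{thm} at level $k$ only consumes Theorem~\ref{lem.B} at level $k-1$), fed into Abel summation, together with the differentiation identity $P_k'=kP_{k-1}$, which the paper also verifies from the closed form. The execution differs in a way worth noting: the paper first settles $s=1$, then performs a second Abel summation against $\log^{s-1}t$ and evaluates $\int_2^x f'(t)\log^{s-1}t\,dt$ by fully expanding every $(\log t+B)^j$ binomially and resumming; you instead do a single Abel summation with the full weight $\log^s t$ and dispatch the resulting integral in one stroke via the antiderivative $\frac{u^s}{s}\sum_{i=0}^{k}\frac{(-1)^i}{s^i}P_k^{(i)}(\log u)$, whose verification is a one-line telescoping computation using $P_k^{(i)}=A_k^iP_{k-i}$ and $P_k^{(k+1)}=0$. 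This buys a substantially shorter and more transparent derivation of the main term, at no cost in rigor. Your error accounting also matches the paper's: the boundary term contributes $O(\log^{s-1}x\,(\log_2x)^{k-1})$, the integrated remainder contributes $O\bigl(\int(\log u)^{k-1}u^{s-2}\,du\bigr)$ whose worst case $(\log_2x)^k$ occurs at $s=1$, and the exact constant (recorded as $f(2)\log^{s-1}2$ in the statement, your endpoint contribution $C$) is in any case swallowed by the error term $O(\log^{s-1}x\,(\log_2x)^{k})$, as you correctly observe.
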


In the case $s=1$, $k=1$, Theorem \ref{lem.B} reduces to Mertens' first theorem (\ref{first}), and $s=1$, $k=2$ is Theorem 3.3 in  B\u{a}nescu-Popa \cite{B\u{a}nescu}. Moreover, the following results follow immediately.
\begin{corollary}\label{cor1}
We have the following evaluation
\begin{equation*}
    \frac{1}{(\log x)^{s}}\sum_{p_{1}\cdots{p_{k}}\leq{x}}\frac{\log^{s}(p_{1}\cdots p_{k})}{p_{1}\cdots p_{k}}
       =\sum_{l=0}^{k-1}(-1)^{l}\frac{A_{k}^{l+1}}{s^{l+1}}P_{k-1-l}(\log_{2}x)+O\left(\frac{(\log_{2}x)^{k}}{\log x}\right).
\end{equation*}
\end{corollary}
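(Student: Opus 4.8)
The plan is to derive Corollary~\ref{cor1} directly from the asymptotic formula \eqref{main1} of Theorem~\ref{lem.B} by dividing through by $(\log x)^{s}$ and checking that the remaining lower-order contributions collapse into the stated error term. Concretely, starting from \eqref{main1} and dividing by $\log^{s}x$ gives
\begin{align*}
  \frac{1}{(\log x)^{s}}\sum_{p_{1}\cdots p_{k}\leq x}\frac{\log^{s}(p_{1}\cdots p_{k})}{p_{1}\cdots p_{k}}
    &=\sum_{l=0}^{k-1}(-1)^{l}\frac{A_{k}^{l+1}}{s^{l+1}}P_{k-1-l}(\log_{2}x)\\
    &\quad+\frac{f(2)\log^{s-1}2}{\log^{s}x}+O\!\left(\frac{(\log_{2}x)^{k}}{\log x}\right),
\end{align*}
where I have used $\log^{s-1}x/\log^{s}x=1/\log x$ to simplify the image of the error term in \eqref{main1}.

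Next I would dispose of the middle term. Since $k$ and $s$ are fixed and $f$ is the explicit expression from Theorem~\ref{lem.B}, the quantity $f(2)\log^{s-1}2$ is a constant depending only on $k$ and $s$, so $f(2)\log^{s-1}2/\log^{s}x = O(1/\log^{s}x)$. Because $s\geq 1$ we have $1/\log^{s}x\leq 1/\log x$ for $x$ sufficiently large, and since $\log_{2}x=\log\log x\to\infty$ we have $(\log_{2}x)^{k}\geq 1$ eventually; hence $O(1/\log^{s}x)=O\!\left((\log_{2}x)^{k}/\log x\right)$. Absorbing this term into the error term already present then yields precisely the assertion of Corollary~\ref{cor1}.

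There is essentially no obstacle here: the corollary is a bookkeeping consequence of Theorem~\ref{lem.B}. The only point requiring a line of justification is the absorption of the constant contribution $f(2)\log^{s-1}2/\log^{s}x$ into $O\!\left((\log_{2}x)^{k}/\log x\right)$, which is immediate once one records that $s\geq 1$ and $(\log_{2}x)^{k}\to\infty$.
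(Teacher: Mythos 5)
Your proposal is correct and matches the paper's route: the paper derives Corollary \ref{cor1} as an immediate consequence of Theorem \ref{lem.B} by dividing (\ref{main1}) by $\log^{s}x$, exactly as you do. Your extra remark absorbing the constant $f(2)\log^{s-1}2/\log^{s}x$ into $O\bigl((\log_{2}x)^{k}/\log x\bigr)$ is the only step needing comment, and it is handled correctly.
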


\begin{corollary}\label{cor2}
We have the following evaluation
\begin{align*}
    \frac{1}{(\log x)^{s}}\sum_{p_{1}\cdots{p_{k}}\leq{\sqrt{x}}}\frac{\log^{s}(p_{1}\cdots p_{k})}{p_{1}\cdots p_{k}}
      =\frac{1}{2^{s}}\sum_{l=0}^{k-1}(-1)^{l}\frac{A_{k}^{l+1}}{s^{l+1}}P_{k-1-l}(\log_{2}\sqrt{x})
         +O\left(\frac{(\log_{2}x)^{k}}{\log x}\right).
\end{align*}
\end{corollary}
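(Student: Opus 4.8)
The plan is to deduce Corollary~\ref{cor2} directly from Corollary~\ref{cor1} (equivalently, from Theorem~\ref{lem.B}) by the substitution $x\mapsto\sqrt{x}$. Since the asymptotic in Corollary~\ref{cor1} holds for all sufficiently large arguments and $\sqrt{x}\to\infty$ as $x\to\infty$, replacing $x$ by $\sqrt{x}$ there gives
\begin{equation*}
  \frac{1}{(\log\sqrt{x})^{s}}\sum_{p_{1}\cdots p_{k}\leq\sqrt{x}}\frac{\log^{s}(p_{1}\cdots p_{k})}{p_{1}\cdots p_{k}}
   =\sum_{l=0}^{k-1}(-1)^{l}\frac{A_{k}^{l+1}}{s^{l+1}}P_{k-1-l}(\log_{2}\sqrt{x})+O\!\left(\frac{(\log_{2}\sqrt{x})^{k}}{\log\sqrt{x}}\right).
\end{equation*}

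Next I would adjust the normalising factor. Because $\log\sqrt{x}=\tfrac12\log x$, we have $(\log\sqrt{x})^{s}=2^{-s}(\log x)^{s}$, so multiplying the displayed identity through by $(\log\sqrt{x})^{s}/(\log x)^{s}=2^{-s}$ produces exactly the main term $2^{-s}\sum_{l=0}^{k-1}(-1)^{l}(A_{k}^{l+1}/s^{l+1})P_{k-1-l}(\log_{2}\sqrt{x})$ claimed in Corollary~\ref{cor2}, together with the error $2^{-s}\,O\bigl((\log_{2}\sqrt{x})^{k}/\log\sqrt{x}\bigr)$.

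Finally I would absorb this error into the stated one. Since $\log_{2}\sqrt{x}=\log\bigl(\tfrac12\log x\bigr)=\log_{2}x-\log 2\leq\log_{2}x$ for $x$ large, and $\log\sqrt{x}=\tfrac12\log x$, one has
\begin{equation*}
  \frac{(\log_{2}\sqrt{x})^{k}}{\log\sqrt{x}}\leq\frac{2(\log_{2}x)^{k}}{\log x},
\end{equation*}
so the whole error is $O\bigl((\log_{2}x)^{k}/\log x\bigr)$, the constant $2^{-s}$ and the extra factor $2$ being harmless. This completes the argument. There is essentially no serious obstacle here: the only points needing a line of care are the legitimacy of substituting $\sqrt{x}$ into an asymptotic valid for large arguments and the bookkeeping that converts the $\sqrt{x}$-shaped error term into a $(\log_{2}x)^{k}/\log x$ error term; both are routine.
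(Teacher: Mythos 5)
Your proof is correct and takes essentially the same route as the paper, which offers no separate argument and simply states Corollary \ref{cor2} as an immediate consequence of Theorem \ref{lem.B} (via Corollary \ref{cor1}). The substitution $x\mapsto\sqrt{x}$ combined with $\log\sqrt{x}=\tfrac12\log x$ and $\log_{2}\sqrt{x}=\log_{2}x-\log 2$ is exactly the intended one-line deduction, and your bookkeeping of the error term is sound.
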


\begin{theorem}\label{thm}
For any positive integer $k$, we have
\begin{equation}\label{Main theorem}
\begin{aligned}
  \sum_{p_{1}\cdots p_{k}\leq x}\frac{1}{p_{1}\cdots p_{k}}&=(\log_{2}x+B)^{k}+\sum_{m=2}^{k}C_{k}^{m}a_{m}(\log_{2}x+B)^{k-m}\\&\quad+O\left(\frac{(\log_{2}x)^{k-1}}{\log x}\right),
\end{aligned}
\end{equation}
where $\{a_{n}\}$ is a sequence related to the Riemann zeta function $\zeta$, that is,
\begin{equation}\label{an}
  \begin{split}
    &a_{2}=-\zeta(2), a_{3}=2\zeta(3), a_{4}=3\zeta(2)^{2}-6\zeta(4),\\
    &a_{k}=\sum_{i=1}^{k-3}(-1)^{i}C_{k-1}^{i}i!\zeta(i+1)a_{k-1-i}+(-1)^{k-1}(k-1)!\zeta(k)  ~~(k>{4}).
  \end{split}
\end{equation}
and $C_{k}^{l}=\binom{k}{l}$.
\end{theorem}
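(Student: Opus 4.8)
The plan is to first read off the shape of the answer from Theorem~\ref{lem.B} by one partial summation, and then to run an induction on $k$ with Dirichlet's hyperbola method to determine the constants $a_m$ and the precise error term. Write $r_k(n)$ for the number of ordered $k$-tuples $(p_1,\dots,p_k)$ of primes with $p_1\cdots p_k=n$, so that $S_k(x):=\sum_{p_1\cdots p_k\le x}(p_1\cdots p_k)^{-1}=\sum_{n\le x}r_k(n)/n$, while by the symmetry of the $k$ variables the left-hand side of \eqref{main1} with $s=1$ equals $U_k(x):=\sum_{n\le x}r_k(n)(\log n)/n$. Since $r_k(n)=0$ for $n<2^{k}$, Abel's summation formula applied to the terms $r_k(n)(\log n)/n$ with the weight $1/\log t$ gives, for $x\ge 2^{k}$,
\[
 S_k(x)=\frac{U_k(x)}{\log x}+\int_{2^{k}}^{x}\frac{U_k(t)}{t\log^{2}t}\,dt .
\]

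Substituting the $s=1$ case of \eqref{main1} (equivalently Corollary~\ref{cor1}) into both occurrences of $U_k$: the boundary term $U_k(x)/\log x$ is the polynomial $\sum_{l=0}^{k-1}(-1)^{l}A_k^{l+1}P_{k-1-l}(\log_2 x)$ in $\log_2 x$ of degree $k-1$, up to a small error, and in the integral the substitution $u=\log_2 t$ turns $dt/(t\log^{2}t)$ into $e^{-u}\,du$ and the main part of the integrand (which is a polynomial in $\log_2 t$) into a polynomial in $u$, whose antiderivative is a polynomial in $u=\log_2 x$ of degree $k$; the $O$-term and the constant $f(2)$ in \eqref{main1} contribute only a convergent integral plus bounded constants. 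Using $P_0=1$, $P_1(y)=y+B$ and $A_k^{1}=k$ one checks the resulting degree-$k$ polynomial is monic, so $S_k(x)=Q_k(\log_2 x)+(\text{error})$ with $Q_k$ monic of degree $k$.

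To pin down $Q_k$ and to obtain the stated error $O((\log_2 x)^{k-1}/\log x)$, I would argue by induction on $k$ (the case $k=1$ being Mertens' second theorem \eqref{Mertens}), recomputing $S_k$ through Dirichlet's hyperbola method,
\[
 S_k(x)=\sum_{p\le\sqrt x}\frac1p\,S_{k-1}\!\Big(\frac xp\Big)+\sum_{d\le\sqrt x}\frac{r_{k-1}(d)}{d}\,S_1\!\Big(\frac xd\Big)-S_{k-1}(\sqrt x)\,S_1(\sqrt x),
\]
using the induction hypothesis for $S_{k-1}$, Mertens' second theorem for $S_1$, and the expansion $\log_2(x/d)+B=(\log_2 x+B)+\log\bigl(1-\tfrac{\log d}{\log x}\bigr)$ (similarly with $p$) to split off the main variable $\log_2 x+B$. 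Expanding the powers of $(\log_2 x+B)+\log\bigl(1-\tfrac{\log d}{\log x}\bigr)$ and summing by parts against $S_{k-1}$ with the substitution $u=\log t/\log x$ (ranging essentially over $(0,\tfrac12]$), the constants produced are the polylogarithms at $\tfrac12$, namely $\int_{0}^{1/2}\frac{(\log(1-u))^{i}}{u}\,du$, which the classical identities (as in Lewin's book) express through $\zeta$-values and powers of $\log 2$. Collecting all contributions and writing $Q_k$ in powers of its argument shifted by $B$: the coefficient of $(\log_2 x+B)^{k}$ is $1$, that of $(\log_2 x+B)^{k-1}$ vanishes (the additivity of $B$ over the prime factors), and matching the coefficient of $(\log_2 x+B)^{k-m}$ for $m\ge2$ forces every power of $\log2$ to cancel and produces exactly the recursion \eqref{an}; the cases $k=2,3$ reproduce \eqref{sadak} and \eqref{popa3} as a check.

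The main obstacle is precisely this bookkeeping of constants — showing that the polylogarithm values at $\tfrac12$ arising from the hyperbola decomposition, together with the $\log2$-shifts contributed by $\log_2\sqrt x=\log_2 x-\log2$ via the term $S_{k-1}(\sqrt x)S_1(\sqrt x)$, combine so that all powers of $\log2$ disappear and the surviving $\zeta$-values fit into the single linear recursion \eqref{an}. The error analysis, by contrast, is routine once one notes that summing $O(1/\log x)$ against $r_{k-1}(d)/d$ over $d\le\sqrt x$ costs only $O((\log_2 x)^{k-1}/\log x)$ and that the remaining pieces are of the same size.
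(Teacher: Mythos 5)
Your overall strategy is the paper's: induction on $k$, the hyperbola identity of Corollary \ref{cor.sep} splitting the sum into the three pieces $A$, $B$, $C$ of Proposition \ref{main}, the expansion $\log_{2}(x/d)+B=(\log_{2}x+B)+\log\bigl(1-\frac{\log d}{\log x}\bigr)$, and the reduction of the resulting prime sums to the integrals $\int_{0+0}^{1/2}\log^{m}(1-u)\,u^{-1}du$; the error analysis you sketch is also the one the paper carries out. The genuine gap sits exactly in the step you flag as ``the main obstacle'' and then do not perform, and it is not mere bookkeeping: you assert that the integrals $\int_{0}^{1/2}(\log(1-u))^{i}u^{-1}du$ are expressed ``through $\zeta$-values and powers of $\log 2$'' by classical identities from Lewin. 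That is true only for $i\le 2$. For $i\ge 3$ the integral evaluation (\ref{eq.int}) shows it involves $Li_{i+1}\bigl(\frac{1}{2}\bigr)$, and $Li_{n}\bigl(\frac{1}{2}\bigr)$ for $n\ge 4$ has no known closed form in $\zeta$-values and $\log 2$. So for $k\ge 4$ your route, as described, terminates with irreducible polylogarithm constants in the main term and cannot yield the purely zeta-valued recursion (\ref{an}).

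The paper's resolution is structural, and it is the idea your proposal is missing: the $Li_{s+1}\bigl(\frac{1}{2}\bigr)$ terms generated by the first hyperbola sum $A$ (through (\ref{eq.int}), via Propositions \ref{1-lnp/lnx}--\ref{prop.A}) cancel \emph{exactly} against the $Li_{l+2}\bigl(\frac{1}{2}\bigr)$ terms generated by the second hyperbola sum $B$ in Proposition \ref{prop.B}, which come from expanding $\log\bigl(1-\frac{\log(p_{1}\cdots p_{k-1})}{\log x}\bigr)$ as a power series and invoking Corollary \ref{cor2} (Theorem \ref{lem.B} at level $k-1$, available by induction) for the weighted moments. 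Verifying this cancellation is the combinatorial identity manipulated at the end of Section 3; only after it do the $(-1)^{t}t!\zeta(t+1)$ terms survive, the powers of $\log 2$ collapse via $P_{s}(y-a)=\sum_{t}C_{s}^{t}(-1)^{t}a^{t}P_{s-t}(y)$, and the recursion (\ref{an}) drops out. A smaller point: your opening paragraph extracts the shape of the answer from Theorem \ref{lem.B} at level $k$, but in the paper that theorem is itself proved from the level-$k$ asymptotic (\ref{simple}); used as more than a heuristic this would be circular, whereas the induction only ever requires Theorem \ref{lem.B} at level $k-1$.
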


\begin{remark} The cases $k=1,2$ and $3$ are Mertens' second theorem (\ref{Mertens}), \cite[Theorem 1]{Popa2} and  \cite[Theorem 11]{Popa3}, respectively. \end{remark}

\begin{remark}
Let $\Gamma$ be the Euler gamma function and $\gamma$ be the Euler constant. Based on the argument of the Selberg-Delange method and the complex function theory, in 2016, Tenenbaum obtained the following multiple Mertens evaluation for $x\geq 3$ (see \cite[Theorem 1]{Tenen2016})
\begin{equation}\label{Ten16} \sum_{p_{1}\cdots p_{k}\leq x}\frac{1}{p_{1}\cdots p_{k}}=P_{k}(\log_{2}x)+O\left(\frac{(\log_{2}x)^{k}}{\log x}\right),\end{equation}
where $P_{k}(X):=\sum_{0\leq j\leq k}\lambda_{j,k}X^{j}$, and
$$\lambda_{j,k}:=\sum_{0\leq m\leq k-j}\dbinom{k}{m,j,k-m-j}(B-\gamma)^{k-m-j}\left(\frac{1}{\Gamma}\right)^{(m)}(1) ~~(0\leq j\leq k).$$
In a recent version of the above article \cite{Tenen2019}, Tenenbaum showed that his method may provide the same error term as Theorem \ref{thm}.

The main term in (\ref{Ten16}) is expressed by the higher order derivatives of the gamma function, while the main term in our formula (\ref{Main theorem})
is related to the special values of Riemann zeta function. There seems to be no direct connections between  them.
But a recent work \cite{J-P-J} shows that (\ref{Main theorem}) and (\ref{Ten16}) are equivalent (see  \cite[Sec. 6]{J-P-J}).
Our approach is based on the Abel summation formula (see \cite{B\u{a}nescu}) and Dirichlet's hyperbola method, it is elementary and makes no use of complex function theory.
Furthermore,  during our approach we also get a multiple generalization of Dirichlet's hyperbola method (see Proposition \ref{prop.sep}).
\end{remark}
\begin{remark}
Korolev \cite[p. 17--33]{Korolev} also calculated some other types multiple sums with primes.
\end{remark}
\begin{remark}
Recently,  building on (\ref{Main theorem}) and (\ref{Ten16}), Bayless, Kinlaw and Lichtman \cite{J-P-J} gave elementary proofs of precise asymptotics for the reciprocal sum of $k$-almost primes, 
$$\begin{aligned}
{\mathcal R}_k(x) = \mathop{\sum_{\Omega(n)=k}}_{n\le x}\frac{1}{n} = \mathop{\sum_{p_1\cdots p_k\le x}}_{p_1\le \ldots\le p_k}\frac{1}{p_1\cdots p_k},
\end{aligned}
$$
where for a positive integer $n,$ $\Omega(n)$ denotes the number of prime factors of $n$, counted with multiplicity. 
\end{remark}

\section{Preliminaries}
\subsection{The hyperbola method of Dirichlet for a multiple sum}

The main purpose of this subsection is to extent the classical hyperbola method of Dirichlet (see \cite[Theorem 3.17]{Apostol}), especially the triple sum given in \cite{Popa3} to multiple cases.
\begin{proposition}\label{prop.sep}
$(1)$ Let $\psi:\Bbb{N}^{k}\to\Bbb{R}$ be a real-valued function. Then for each $0<y<x$, we have the following identity
\begin{equation}\label{2.1(1)}
  \sum_{i_{1}\cdots{i_{k}}\leq x}\psi(i_{1},\cdots,i_{k})=\sum_{i_{k}\leq{y}}\sum_{i_{1}\cdots{i_{k-1}}\leq\frac{x}{i_{k}}}\psi(i_{1},\cdots,i_{k})
  +\sum_{i_{1}\cdots{i_{k-1}}\leq\frac{x}{y}}\sum_{y<i_{k}\leq\frac{x}{i_{1}\cdots{i_{k-1}}}}\psi(i_{1},\cdots,i_{k}).
\end{equation}
$(2)$ Let $f_{1},\cdots,f_{k}:\Bbb{N}\to\Bbb{R}$ be real-valued functions and define $S_{f_{k}},S_{f_{1},\cdots,f_{k-1}}:(0,\infty)\to\Bbb{R}$ by
\begin{equation*}S_{f_{k}}(x)=\sum_{i_{k}\leq x}f_{k}(i_{k})~~ \textrm{and}~~S_{f_{1},\cdots,f_{k-1}}(x)=\sum_{i_{1}\cdots{i_{k-1}}\leq x}f_{1}(i_{1})\cdots f_{k-1}(i_{k-1}).
\end{equation*}
Then for each $0<y<x$, we have the following identity
\begin{equation}
\begin{aligned}
  \sum_{i_{1}\cdots{i_{k}}\leq x}f_{1}(i_{1})\cdots f_{k}(i_{k})
  =&\sum_{i_{k}\leq{y}}f_{k}(i_{k})S_{f_{1},\cdots,f_{k-1}}\Big(\frac{x}{i_{k}}\Big)\\
  &+\sum_{i_{1}\cdots{i_{k-1}}\leq\frac{x}{y}}f_{1}(i_{1})\cdots f_{k-1}(i_{k-1})S_{f_{k}}\Big(\frac{x}{i_{1}\cdots{i_{k-1}}}\Big)\\
   &-S_{f_{k}}(y)S_{f_{1},\cdots,f_{k-1}}\Big(\frac{x}{y}\Big).
\end{aligned}
\end{equation}
\end{proposition}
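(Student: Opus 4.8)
The plan is to prove both identities by the same device: decompose the summation range $\{i_1\cdots i_k\le x\}$ according to whether the last variable $i_k$ is $\le y$ or $>y$, exactly mirroring the classical two-variable hyperbola method but treating the product $i_1\cdots i_{k-1}$ as a single "lumped" variable. For part $(1)$, fix a point $(i_1,\dots,i_k)$ with $i_1\cdots i_k\le x$ and write $N=i_1\cdots i_{k-1}$, so the constraint is $Ni_k\le x$. Either $i_k\le y$, in which case $N\le x/i_k$, contributing to the first double sum; or $i_k>y$, in which case $N\le x/i_k< x/y$, so $N\le x/y$ and simultaneously $i_k\le x/N$ with $i_k>y$, contributing to the second double sum. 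These two cases are mutually exclusive and exhaustive, and conversely every term appearing on the right satisfies $i_1\cdots i_k\le x$, so the two sides enumerate the same multiset of tuples. I would present this as a short counting argument; no interchange of infinite sums is involved since all sums are finite.

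For part $(2)$, I would specialize $\psi(i_1,\dots,i_k)=f_1(i_1)\cdots f_k(i_k)$ in identity $(1)$ and then rewrite each piece using the definitions of $S_{f_k}$ and $S_{f_1,\dots,f_{k-1}}$. In the first sum of $(1)$, the inner sum over $i_1\cdots i_{k-1}\le x/i_k$ of $f_1(i_1)\cdots f_{k-1}(i_{k-1})$ is by definition $S_{f_1,\dots,f_{k-1}}(x/i_k)$, and since $f_k(i_k)$ is constant on that inner sum we may pull it out, giving $\sum_{i_k\le y} f_k(i_k) S_{f_1,\dots,f_{k-1}}(x/i_k)$. However, the second sum of $(1)$ is not yet in the desired shape: its inner sum runs over $y<i_k\le x/(i_1\cdots i_{k-1})$, i.e. over an interval with a nonzero lower endpoint, whereas $S_{f_k}$ is a sum over $[1,\cdot]$. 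The key step is to split this inner sum as $\sum_{i_k\le x/N}-\sum_{i_k\le y}$, which turns it into $S_{f_k}(x/N)-S_{f_k}(y)$ for $N=i_1\cdots i_{k-1}$. Substituting back, the second sum of $(1)$ becomes
\[
\sum_{i_1\cdots i_{k-1}\le x/y} f_1(i_1)\cdots f_{k-1}(i_{k-1})\Bigl(S_{f_k}\bigl(\tfrac{x}{i_1\cdots i_{k-1}}\bigr)-S_{f_k}(y)\Bigr),
\]
and the term containing $S_{f_k}(y)$ factors out as $S_{f_k}(y)\sum_{i_1\cdots i_{k-1}\le x/y}f_1(i_1)\cdots f_{k-1}(i_{k-1}) = S_{f_k}(y)S_{f_1,\dots,f_{k-1}}(x/y)$, which is precisely the subtracted cross term in the claimed identity. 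Collecting the three resulting pieces yields $(2)$.

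I do not expect a genuine obstacle here: both parts are bookkeeping identities with only finitely many nonzero terms, so there are no convergence or rearrangement issues. The one point demanding care — the place where a careless argument would go wrong — is the range bookkeeping in the second sum of part $(1)$: one must check that the outer condition is exactly $i_1\cdots i_{k-1}\le x/y$ (not $<x/y$) and that the inner condition correctly becomes $y<i_k\le x/(i_1\cdots i_{k-1})$, with the strict inequality at $y$ matching the strict inequality used to define the first case, so that no tuple is double-counted or omitted. Once that is pinned down, the passage from $(1)$ to $(2)$ is the standard "add and subtract the corner rectangle" manoeuvre familiar from the $k=2$ case, and the proof is complete.
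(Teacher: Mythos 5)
Your proposal is correct and follows essentially the same route as the paper: both split the range according to whether $i_{k}\leq y$ or $i_{k}>y$, observe that the second case forces $i_{1}\cdots i_{k-1}\leq x/y$ with $y<i_{k}\leq x/(i_{1}\cdots i_{k-1})$, and then derive $(2)$ from $(1)$ by writing the inner sum over $y<i_{k}\leq x/(i_{1}\cdots i_{k-1})$ as $S_{f_{k}}(x/(i_{1}\cdots i_{k-1}))-S_{f_{k}}(y)$ and factoring out the cross term $S_{f_{k}}(y)S_{f_{1},\cdots,f_{k-1}}(x/y)$. No gaps; the range bookkeeping you flag is exactly the point the paper also verifies.
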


\begin{proof} Our method  employed here is analogue to \cite[Proposition 3]{Popa3}.

$(1)$
We have
\begin{align*}
  \sum_{i_{1}\cdots{i_{k}}\leq x}\psi(i_{1},\cdots,i_{k})
  =\sum_{i_{1}\cdots{i_{k}}\leq x, i_{k}\leq y}\psi(i_{1},\cdots,i_{k})
   +\sum_{i_{1}\cdots{i_{k}}\leq x, y<i_{k}}\psi(i_{1},\cdots,i_{k}),
\end{align*}
since
\begin{align*}
  \left\{(i_{1},\cdots, i_{k})\in\mathbb{N}^{k}~|~i_{1}\cdots i_{k}\leq x\right\}
    =&\left\{(i_{1},\cdots, i_{k})\in\mathbb{N}^{k}~|~i_{1}\cdots i_{k}\leq x, ~i_{k}\leq y\right\}\\
     &\cup\left\{(i_{1},\cdots, i_{k})\in\mathbb{N}^{k}~|~i_{1}\cdots i_{k}\leq x, ~y<i_{k}\right\}
\end{align*}
and the sets on the right side of the equation are disjoint.

Clearly $$\sum_{i_{1}\cdots{i_{k}}\leq x, i_{k}\leq y}\psi(i_{1},\cdots,i_{k})=\sum_{i_{k}\leq y}
\sum_{i_{1}\cdots{i_{k-1}}\leq\frac{x}{i_{k}} }\psi(i_{1},\cdots,i_{k}),$$
which follows that
\begin{align}\label{*}
  \sum_{i_{1}\cdots{i_{k}}\leq x}\psi(i_{1},\cdots,i_{k})
  =\sum_{i_{k}\leq y}\sum_{i_{1}\cdots{i_{k-1}}\leq\frac{x}{i_{k}} }\psi(i_{1},\cdots,i_{k})
   +\sum_{i_{1}\cdots{i_{k}}\leq x}\sum_{y<i_{k}}\psi(i_{1},\cdots,i_{k}).
\end{align}
It is easy to check that
\begin{align*}
  &\left\{(i_{1},\cdots, i_{k})\in\mathbb{N}^{k}~|~i_{1}\cdots i_{k}\leq x, ~y<i_{k}\right\}\\
              =&\left\{(i_{1},\cdots, i_{k})\in\mathbb{N}^{k}~\Big|~y<i_{k}\leq\frac{x}{i_{1}\cdots i_{k-1}}, ~i_{1}\cdots i_{k-1}\leq\frac{x}{y}\right\}.
\end{align*}
Thus we obtain
\begin{align}\label{**}
  \sum_{i_{1}\cdots{i_{k}}\leq x}\sum_{y<i_{k}}\psi(i_{1},\cdots,i_{k})
   =\sum_{i_{1}\cdots i_{k-1}\leq\frac{x}{y}}\sum_{y<i_{k}\leq\frac{x}{i_{1}\cdots i_{k-1}}}\psi(i_{1},\cdots,i_{k}).
\end{align}
Now we have completed the proof for the first part by substituting (\ref{**}) into (\ref{*}).

$(2)$ By (\ref{2.1(1)}), we have
\begin{align*}
  \sum_{i_{1}\cdots{i_{k}}\leq x}f_{1}(i_{1})\cdots f_{k}(i_{k})
  =&\sum_{i_{k}\leq{y}}f_{k}(i_{k})\left(\sum_{i_{1}\cdots i_{k-1}\leq\frac{x}{i_{k}}}f_{1}(i_{1})\cdots f_{k-1}(i_{k-1})\right)\\
    &+\sum_{i_{1}\cdots{i_{k-1}}\leq\frac{x}{y}}f_{1}(i_{1})\cdots f_{k-1}(i_{k-1})\left(\sum_{y<i_{k}\leq\frac{x}{i_{1}\cdots i_{k-1}}}f_{k}(i_{k})\right)\\
  =&\sum_{i_{k}\leq{y}}f_{k}(i_{k})S_{f_{1},\cdots,f_{k-1}}\Big(\frac{x}{i_{k}}\Big)\\
   &+\sum_{i_{1}\cdots{i_{k-1}}\leq\frac{x}{y}}f_{1}(i_{1})\cdots f_{k-1}(i_{k-1})\left(S_{f_{k}}\Big(\frac{x}{i_{1}\cdots{i_{k-1}}}\Big)-S_{f_{k}}(y)\right)\\
  =&\sum_{i_{k}\leq{y}}f_{k}(i_{k})S_{f_{1},\cdots,f_{k-1}}\Big(\frac{x}{i_{k}}\Big)\\
   &+\sum_{i_{1}\cdots{i_{k-1}}\leq\frac{x}{y}}f_{1}(i_{1})\cdots f_{k-1}(i_{k-1})
       S_{f_{k}}\Big(\frac{x}{i_{1}\cdots{i_{k-1}}}\Big)\\
   &-S_{f_{k}}(y)S_{f_{1},\cdots,f_{k-1}}\Big(\frac{x}{y}\Big),
\end{align*}
which completes the proof.
\end{proof}

Letting $f_{1}=\cdots=f_{k}=f$ and $y=\sqrt{x}$ in Proposition \ref{prop.sep}, we immediately have the following corollaries.
\begin{corollary}\label{cor}
Let $f:\Bbb{N}\to\Bbb{R}$ be a real-valued function, and define $S_{1}, S_{k-1}:(0,\infty)\to\Bbb{R}$ by \begin{equation*} S_{1}(x)=\sum_{i\leq x}f(i)~~\textrm{and}~~S_{k-1}(x)=\sum_{i_{1}\cdots{i_{k-1}}\leq x}f(i_{1})\cdots f(i_{k-1}),
\end{equation*}
 we have
\begin{equation*}
\begin{split}
  \sum_{i_{1}\cdots{i_{k}}\leq x}f(i_{1})\cdots f(i_{k})=&\sum_{i_{k}\leq{\sqrt{x}}}f(i_{k})S_{k-1}\Big(\frac{x}{i_{k}}\Big)
  +\sum_{i_{1}\cdots{i_{k-1}}\leq{\sqrt{x}}}f(i_{1})\cdots f(i_{k-1})S_{1}\Big(\frac{x}{i_{1}\cdots{i_{k-1}}}\Big)\\
  &-S_{1}(\sqrt{x})S_{k-1}(\sqrt{x}).
\end{split}
\end{equation*}
\end{corollary}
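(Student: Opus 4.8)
The plan is to obtain Corollary \ref{cor} as an immediate specialization of part $(2)$ of Proposition \ref{prop.sep}, which has already been proved. Concretely, I would set $f_{1}=\cdots=f_{k}=f$ in that proposition. Under this choice the auxiliary functions simply collapse to those of the corollary: $S_{f_{k}}$ becomes the function $S_{1}$, and $S_{f_{1},\cdots,f_{k-1}}$ becomes $S_{k-1}$. So there is nothing to compute beyond matching notation.

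Next I would take $y=\sqrt{x}$, which is legitimate whenever $x>1$ (and for $x\le 1$ both sides reduce trivially, so we may assume $x>1$). Then $x/y = x/\sqrt{x} = \sqrt{x}$, and the three terms on the right-hand side of Proposition \ref{prop.sep}$(2)$ become respectively $\sum_{i_{k}\le\sqrt{x}}f(i_{k})S_{k-1}(x/i_{k})$, $\sum_{i_{1}\cdots i_{k-1}\le\sqrt{x}}f(i_{1})\cdots f(i_{k-1})S_{1}\bigl(x/(i_{1}\cdots i_{k-1})\bigr)$, and $-S_{1}(\sqrt{x})S_{k-1}(\sqrt{x})$. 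This is exactly the asserted identity, so the proof is finished.

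Since the result is a direct substitution, there is no genuine obstacle; the only point needing a word of care is the harmless hypothesis $x>1$, required so that $0<y<x$ holds with $y=\sqrt{x}$ when invoking Proposition \ref{prop.sep}. (Alternatively, one could prove the corollary from scratch by repeating verbatim the two-step splitting argument from the proof of Proposition \ref{prop.sep}: split $\{i_{1}\cdots i_{k}\le x\}$ according to $i_{k}\le\sqrt{x}$ or $i_{k}>\sqrt{x}$, rewrite the second constraint as $i_{1}\cdots i_{k-1}\le\sqrt{x}$ with $\sqrt{x}<i_{k}\le x/(i_{1}\cdots i_{k-1})$, and complete the inner $i_{k}$-sum to $S_{1}$ by adding and subtracting $S_{1}(\sqrt{x})$, producing the correction term $-S_{1}(\sqrt{x})S_{k-1}(\sqrt{x})$.)
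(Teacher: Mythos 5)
Your proposal is correct and is exactly the paper's own argument: the paper derives Corollary \ref{cor} by setting $f_{1}=\cdots=f_{k}=f$ and $y=\sqrt{x}$ in Proposition \ref{prop.sep}$(2)$, just as you do. The extra remark that one needs $x>1$ for $0<\sqrt{x}<x$ is a harmless refinement the paper leaves implicit.
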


\begin{corollary}\label{cor.sep}
Let $\Bbb{P}$ be the set of all prime numbers and $u:\Bbb{P}\to\Bbb{R}$ be a real-valued function on $\Bbb{P}$. Define $V_{1}, V_{k-1}:(0,\infty)\to\Bbb{R}$ by
$$V_{1}(x)=\sum_{p\leq x}u(p)~~\textrm{and}~~V_{k-1}(x)=\sum_{p_{1}\cdots{p_{k-1}}\leq x}u(p_{1})\cdots u(p_{k-1}),$$ we have
\begin{align*}
  \sum_{p_{1}\cdots{p_{k}}\leq x}u(p_{1})\cdots u(p_{k})
  =&\sum_{p_{k}\leq{\sqrt{x}}}u(p_{k})V_{k-1}\Big(\frac{x}{p_{k}}\Big)\\
   &+\sum_{p_{1}\cdots{p_{k-1}}\leq{\sqrt{x}}}u(p_{1})\cdots u(p_{k-1})V_{1}\Big(\frac{x}{p_{1}\cdots{p_{k-1}}}\Big)\\
   &-V_{1}(\sqrt{x})V_{k-1}(\sqrt{x}).
\end{align*}
\end{corollary}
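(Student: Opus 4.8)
The plan is to deduce Corollary~\ref{cor.sep} from Corollary~\ref{cor} by extending $u$ off the primes by zero. First I would define $f:\mathbb{N}\to\mathbb{R}$ by $f(n)=u(n)$ when $n$ is prime and $f(n)=0$ otherwise. The key observation is that for any $m\ge 1$ and any $t>0$,
$$\sum_{i_{1}\cdots i_{m}\le t}f(i_{1})\cdots f(i_{m})=\sum_{p_{1}\cdots p_{m}\le t}u(p_{1})\cdots u(p_{m}),$$
since the product $f(i_{1})\cdots f(i_{m})$ vanishes unless every factor $i_{j}$ is prime, in which case it equals $u(i_{1})\cdots u(i_{m})$. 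In particular, in the notation of Corollary~\ref{cor} one has $S_{1}\equiv V_{1}$ and $S_{k-1}\equiv V_{k-1}$, and the left-hand side $\sum_{i_{1}\cdots i_{k}\le x}f(i_{1})\cdots f(i_{k})$ equals $\sum_{p_{1}\cdots p_{k}\le x}u(p_{1})\cdots u(p_{k})$.

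Then I would simply invoke Corollary~\ref{cor} with this choice of $f$ (which is itself the specialization $f_{1}=\cdots=f_{k}=f$, $y=\sqrt{x}$ of Proposition~\ref{prop.sep}(2)) and rewrite every sum appearing there via the observation above. The three terms on the right-hand side of Corollary~\ref{cor} then become exactly $\sum_{p_{k}\le\sqrt{x}}u(p_{k})V_{k-1}(x/p_{k})$, the term $\sum_{p_{1}\cdots p_{k-1}\le\sqrt{x}}u(p_{1})\cdots u(p_{k-1})V_{1}(x/(p_{1}\cdots p_{k-1}))$, and $-V_{1}(\sqrt{x})V_{k-1}(\sqrt{x})$, which is precisely the claimed identity.

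There is essentially no obstacle here: the statement is a direct specialization, and the only point deserving a word of care is that extending $u$ by zero introduces no spurious contributions — this is exactly the vanishing of the multiplicative product whenever some index is composite — and that the range $p_{k}\le\sqrt{x}$ in the inner double sum is inherited verbatim from the choice $y=\sqrt{x}$ in Proposition~\ref{prop.sep}. If one preferred to avoid the zero-extension, one could instead re-run the proof of Proposition~\ref{prop.sep}(1)--(2) directly with $\psi$ supported on $\mathbb{P}^{k}$; but reusing Corollary~\ref{cor} is cleaner and keeps the argument self-contained.
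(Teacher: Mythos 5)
Your proposal is correct and is exactly the paper's own argument: the paper proves Corollary \ref{cor.sep} by setting $f=u\chi_{\Bbb{P}}$ (the zero-extension of $u$ to $\Bbb{N}$) in Corollary \ref{cor}. You have merely spelled out the verification that the sums over integers collapse to sums over primes, which the paper leaves implicit.
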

\begin{proof}
Set $f=u\chi_{\Bbb{P}}$, $f(n)=\left\{
  \begin{array}{ll}
  u(n) &n\in\Bbb{P}\\
  0 &n\not\in\Bbb{P}
  \end{array}
  \right.$ in Corollary \ref{cor}.
\end{proof}

\subsection{Polylogarithmic functions}
 Polylogarithmic functions $Li_{n}:[0,1]\to\Bbb{R}$ are defined by (see \cite{Lewin})
\begin{align*}
  &Li_{2}(x)=-\int_{0}^{x}\frac{\log(1-t)}{t}dt=\sum_{k=1}^{\infty}\frac{x^{k}}{k^{2}},\\
  &Li_{n}(x)=\int_{0}^{x}\frac{Li_{n-1}(t)}{t}dt=\sum_{k=1}^{\infty}\frac{x^{k}}{k^{n}}~~(n\geq 3).
\end{align*}
They will appear in a natural way, as in \cite{Popa3}, in the proof of multiple Mertens evaluations.

We will need the following integral later.
\begin{proposition}
Denote by $a=\log2$. For any natural number $m$, we have
\begin{equation}\label{eq.int}
  \begin{split}
    \int_{0+0}^{\frac{1}{2}}\frac{\log^{m}(1-x)}{x}dx
  &=(-a)^{m+1}+(-1)^{m}m!\zeta(m+1)\\
  &\quad+(-1)^{m-1}\sum_{s=1}^{m}A_{m}^{s}a^{m-s}Li_{s+1}\Big(\frac{1}{2}\Big).
  \end{split}
\end{equation}
\end{proposition}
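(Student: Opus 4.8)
The plan is to prove (\ref{eq.int}) by induction on $m$, using integration by parts to peel off one factor of $\log(1-x)$ at a time, and expressing the resulting boundary and integral terms through the polylogarithms $Li_{s}(\tfrac12)$. The base case $m=0$ is immediate since $\int_{0}^{1/2}\frac{dx}{x}$ — wait, that diverges, so in fact the smallest genuinely convergent case is $m=1$, and there one recognises $\int_{0+0}^{1/2}\frac{\log(1-x)}{x}\,dx=-Li_{2}(\tfrac12)$ directly from the definition of $Li_{2}$; checking that the right-hand side of (\ref{eq.int}) collapses to $-a^{2}-\zeta(2)+A_{1}^{1}a^{0}Li_{2}(\tfrac12)$... one then uses the classical identity $Li_{2}(\tfrac12)=\tfrac{\pi^{2}}{12}-\tfrac{a^{2}}{2}=\tfrac{\zeta(2)}{2}-\tfrac{a^{2}}{2}$ to verify equality. (Here the divergence at $x=0$ of individual pieces is handled by the $0+0$ lower limit, i.e.\ everything is understood as $\lim_{\varepsilon\to 0^{+}}\int_{\varepsilon}^{1/2}$, with the $\log$ factors ensuring the combined integrand stays integrable.)

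For the inductive step, I would write $\int_{\varepsilon}^{1/2}\frac{\log^{m}(1-x)}{x}\,dx$ and integrate by parts with $u=\log^{m}(1-x)$ and $dv=\frac{dx}{x}$, i.e.\ $v=\log x$. This gives
\begin{equation*}
\Big[\log^{m}(1-x)\log x\Big]_{\varepsilon}^{1/2}+m\int_{\varepsilon}^{1/2}\frac{\log^{m-1}(1-x)\log x}{1-x}\,dx.
\end{equation*}
The boundary term at $x=1/2$ contributes $(-a)^{m}\cdot(-a)=(-a)^{m+1}$ — after noting $\log(1-\tfrac12)=\log\tfrac12=-a$ and $\log\tfrac12=-a$ — and the term at $\varepsilon$ vanishes as $\varepsilon\to 0^{+}$ since $\log^{m}(1-\varepsilon)\sim(-\varepsilon)^{m}$ kills $\log\varepsilon$. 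In the remaining integral I substitute $x\mapsto 1-x$, turning it into $m\int_{1/2}^{1-\varepsilon}\frac{\log^{m-1}x\cdot\log(1-x)}{x}\,dx$; then I integrate by parts again (or expand $\log^{m-1}x$ appropriately) to reduce the exponent on $\log(1-x)$ and generate the polylog terms. The cleanest bookkeeping is probably to prove the auxiliary formula for $\int_{0}^{1/2}\frac{\log^{m-1}x\,\log(1-x)}{x}\,dx$ and $\int_{1/2}^{1}\frac{\log^{m-1}x\,\log(1-x)}{x}\,dx$ separately, using $\int_{0}^{1}\frac{\log^{m-1}x\,\log(1-x)}{x}\,dx=(-1)^{m}m!\,\zeta(m+1)$ (a standard evaluation via the series $-\log(1-x)=\sum x^{k}/k$ and $\int_{0}^{1}x^{k-1}\log^{m-1}x\,dx=(-1)^{m-1}(m-1)!/k^{m}$), which is exactly the source of the $(-1)^{m}m!\zeta(m+1)$ term in (\ref{eq.int}).

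The combinatorial coefficients $A_{m}^{s}=\binom{m}{s}s!$ and the telescoping across $s=1,\dots,m$ should emerge from iterating the integration by parts: each step trades one power of $\log x$ for a polylogarithm of one higher index while multiplying by the current exponent, producing a falling factorial, and the $a^{m-s}$ powers come from the boundary evaluations at $x=1/2$ where $\log\tfrac12=-a$. I expect the main obstacle to be not any single step but the careful tracking of signs and of the $\varepsilon\to 0^{+}$ limits through the repeated integrations by parts — in particular making sure that the (individually divergent) boundary contributions at the origin cancel correctly and that the index shift $x\mapsto 1-x$ is applied consistently — so that the accumulated coefficient of $a^{m-s}Li_{s+1}(\tfrac12)$ comes out to precisely $(-1)^{m-1}A_{m}^{s}$. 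A useful sanity check at the end is the case $m=2$, where (\ref{eq.int}) should reproduce (together with $Li_{2}(\tfrac12)=\tfrac{\zeta(2)}{2}-\tfrac{a^{2}}{2}$) the identity $-a^{2}+2\int_{0+0}^{1/2}\frac{\log(1-x)}{x}\,dx=-\zeta(2)$ quoted after (\ref{popa2}) in the introduction.
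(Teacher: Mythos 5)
Your plan follows essentially the same route as the paper: one integration by parts with $v=\log x$, the boundary value $(-a)^{m+1}$ at $x=\tfrac12$ (with the $\varepsilon\to 0^{+}$ endpoint vanishing), the substitution $x\mapsto 1-x$, and then extraction of the zeta and polylogarithm terms from $m\int_{1/2}^{1-0}\frac{\log^{m-1}x\,\log(1-x)}{x}\,dx$. The paper performs this last step by expanding $\log(1-x)=-\sum_{k\ge 1}x^{k}/k$ and invoking the closed-form antiderivative $\int x^{k-1}\log^{m-1}x\,dx=\sum_{s=1}^{m}(-1)^{s-1}A_{m-1}^{s-1}\,x^{k}\log^{m-s}x/k^{s}+\mathrm{constant}$, whose evaluation at $x=1$ gives $\zeta(m+1)$ and at $x=\tfrac12$ gives the $Li_{s+1}(\tfrac12)$ terms with exactly the coefficients $(-1)^{m-1}A_{m}^{s}a^{m-s}$ that you leave as ``should emerge from iterating the integration by parts''; that antiderivative is the clean way to close your one remaining gap. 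Three concrete slips to repair: (i) the induction framing is idle --- after the first integration by parts the surviving integral is not the $(m-1)$-instance of the statement, the inductive hypothesis is never invoked, and the argument is really a direct computation for each $m$; (ii) in the base case the boundary term is $(-a)^{1+1}=+a^{2}$, not $-a^{2}$, and with your sign the check against $Li_{2}(\tfrac12)=\tfrac{\zeta(2)}{2}-\tfrac{a^{2}}{2}$ fails rather than succeeds; (iii) your auxiliary identity should read $\int_{0}^{1}\frac{\log^{m-1}x\,\log(1-x)}{x}\,dx=(-1)^{m}(m-1)!\,\zeta(m+1)$ (as follows from the very ingredient $\int_{0}^{1}x^{k-1}\log^{m-1}x\,dx=(-1)^{m-1}(m-1)!/k^{m}$ that you quote); the $m!$ in the proposition comes from the outer prefactor $m$, not from that integral itself.
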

\begin{proof}
From the formula of integration by parts and $\log(1-x)=-\sum_{k=1}^{\infty}\frac{x^{k}}{k}$, we obtain
\begin{align*}
  \int_{0+0}^{\frac{1}{2}}\frac{\log^{m}(1-x)}{x}dx
  &=\log x\log^{m}(1-x)\Big|_{0+0}^{\frac{1}{2}}+m\int_{0+0}^{\frac{1}{2}}\log x\frac{\log^{m-1}(1-x)}{1-x}dx\\
  &=(-a)^{m+1}+m\int_{\frac{1}{2}}^{1-0}\frac{\log^{m-1}x\log(1-x)}{x}dx\\
  &=(-a)^{m+1}+m\int_{\frac{1}{2}}^{1-0}\frac{\log^{m-1}x}{x}\left(-\sum_{k=1}^{\infty}\frac{x^{k}}{k}\right)dx\\
  &=(-a)^{m+1}-m\sum_{k=1}^{\infty}\frac{1}{k}\int_{\frac{1}{2}}^{1-0}x^{k-1}\log^{m-1}xdx.
\end{align*}
Using the following indefinite integral, which can also be derived from the formula of integration by parts,
\begin{align*}
  \int x^{k-1}\log^{m-1}xdx=\sum_{s=1}^{m}(-1)^{s-1}A_{m-1}^{s-1}\frac{x^{k}\log^{m-s}x}{k^{s}}+\rm{constant},
\end{align*}
we have
\begin{align*}
  \int_{\frac{1}{2}}^{1-0}x^{k-1}\log^{m-1}xdx=(-1)^{m-1}(m-1)!\frac{1}{k^{m}}
   -\sum_{s=1}^{m}(-1)^{s-1}A_{m-1}^{s-1}\frac{(-a)^{m-s}}{k^{s}2^{k}}.
\end{align*}
By the definition of polylogarithmic functions $Li_{n}~(n\geq 2)$, we get
\begin{align*}
  &\int_{0+0}^{\frac{1}{2}}\frac{\log^{m}(1-x)}{x}dx\\
  &=(-a)^{m+1}-m\sum_{k=1}^{\infty}\frac{1}{k}\left((-1)^{m-1}(m-1)!\frac{1}{k^{m}}
   -\sum_{s=1}^{m}(-1)^{s-1}A_{m-1}^{s-1}\frac{(-a)^{m-s}}{k^{s}2^{k}}\right)\\
  &=(-a)^{m+1}+(-1)^{m}m!\sum_{k=1}^{\infty}\frac{1}{k^{m+1}}
   +(-1)^{m-1}\sum_{s=1}^{m}A_{m}^{s}a^{m-s}\sum_{k=1}^{\infty}\frac{1}{k^{s+1}2^{k}}\\
  &=(-a)^{m+1}+(-1)^{m}m!\zeta(m+1)
   +(-1)^{m-1}\sum_{s=1}^{m}A_{m}^{s}a^{m-s}Li_{s+1}\left(\frac{1}{2}\right),
\end{align*}
which completes the proof.
\end{proof}

\section{Proofs of the main results}
In this section, we will prove our main results (Theorems \ref{first} and \ref{thm} above).

For $k=1,~2$, and $3$, it is easy to verify the result (compare with \cite{Popa2,Popa3}).
We proceed by induction on $k$. Assume that the result holds for any positive integer $<k$, we desire to show that it holds for $k$.

For simplification of notations, we set
\begin{equation}\label{simple}
   \sum_{p_{1}\cdots p_{s}\leq x}\frac{1}{p_{1}\cdots p_{s}}=P_{s}(\log_{2}x)+R_{s}(x),
\end{equation}
where, with $B$ is the Mertens constant as before,
\begin{equation}\label{simplified}
   P_{s}(y)=(y+B)^{s}+\sum_{m=2}^{s}C_{s}^{m}a_{m}(y+B)^{s-m},~~R_{s}(x)=O\left(\frac{(\log_{2}x)^{s-1}}{\log x}\right).
\end{equation}
For example, $P_{1}(y)=y+B$, $P_{2}(y)=(y+B)^{2}-\zeta(2)$, $P_{3}(y)=(y+B)^{3}-3\zeta(2)(y+B)+2\zeta(3)$, they have been appeared in Popa's papers \cite{Popa2} and \cite{Popa3}.

It is clear that
$$P_{s}(y)=P_{1}(y)^{s}+\sum_{m=2}^{s}C_{s}^{m}a_{m}P_{1}(y)^{s-m},$$
and with a natural notation $P_{0}(y)=1$,
$$P_{s}(y-a)=\sum_{t=0}^{s}C_{s}^{t}(-1)^{t}a^{t}P_{s-t}(y).$$

It should be noted that our proof is always under the inductive hypothesis, and we will not repeat it.

First, we prove the following evaluation.
\begin{proposition}\label{main}
Suppose that $p_{1},\cdots,{p_{k}}$ are primes. Then we have
\begin{equation}\label{proposition 3.1}
\begin{aligned}
        \sum_{p_{1}\cdots{p_{k}}\leq x}\frac{1}{p_{1}\cdots p_{k}}
       =&\sum_{p_{1}\leq{\sqrt{x}}}\frac{1}{p_{1}}P_{k-1}\left(\log_{2}\Big(\frac{x}{p_{1}}\Big)\right)\\
        &+\sum_{p_{1}\cdots{p_{k-1}}\leq{\sqrt{x}}}\frac{1}{p_{1}\cdots p_{k-1}}P_{1}\left(\log_{2}\Big(\frac{x}{p_{1}\cdots{p_{k-1}}}\Big)\right)\\
        &-P_{1}(\log_{2}\sqrt{x})P_{k-1}(\log_{2}\sqrt{x})+O\left(\frac{(\log_{2}x)^{k-1}}{\log x}\right)\\
       =&A+B-C+O\left(\frac{(\log_{2}x)^{k-1}}{\log x}\right),
\end{aligned}
\end{equation}
with the notations
\begin{align*}
          A&=\sum_{p_{1}\leq{\sqrt{x}}}\frac{1}{p_{1}}P_{k-1}\left(\log_{2}\Big(\frac{x}{p_{1}}\Big)\right),\\
          B&=\sum_{p_{1}\cdots{p_{k-1}}\leq{\sqrt{x}}}\frac{1}{p_{1}\cdots p_{k-1}}P_{1}\left(\log_{2}\Big(\frac{x}{p_{1}\cdots{p_{k-1}}}\Big)\right),\\
          C&=P_{1}(\log_{2}\sqrt{x})P_{k-1}(\log_{2}\sqrt{x}).
\end{align*}
\end{proposition}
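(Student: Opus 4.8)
The plan is to apply Corollary~\ref{cor.sep} with $u(p)=1/p$, so that $V_1(x)=\sum_{p\le x}1/p$ and $V_{k-1}(x)=\sum_{p_1\cdots p_{k-1}\le x}1/(p_1\cdots p_{k-1})$, and with the splitting parameter $y=\sqrt{x}$. This immediately yields the exact identity
\begin{equation*}
\sum_{p_1\cdots p_k\le x}\frac{1}{p_1\cdots p_k}
=\sum_{p_k\le\sqrt{x}}\frac{1}{p_k}V_{k-1}\Big(\frac{x}{p_k}\Big)
+\sum_{p_1\cdots p_{k-1}\le\sqrt{x}}\frac{1}{p_1\cdots p_{k-1}}V_1\Big(\frac{x}{p_1\cdots p_{k-1}}\Big)
-V_1(\sqrt{x})V_{k-1}(\sqrt{x}).
\end{equation*}
The task is then to replace each occurrence of $V_1$ and $V_{k-1}$ by its asymptotic expansion $P_1$, resp.\ $P_{k-1}$, coming from the inductive hypothesis (for $V_{k-1}$, since $k-1<k$) and from Mertens' second theorem (for $V_1$, which is the case $j=1$ already recorded as $P_1(\log_2 x)+R_1(x)$), while controlling the accumulated error.

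Concretely, I would proceed term by term. In the first sum, write $V_{k-1}(x/p_k)=P_{k-1}(\log_2(x/p_k))+R_{k-1}(x/p_k)$; the main part gives exactly the term $A$, and I must show $\sum_{p_k\le\sqrt{x}}\frac{1}{p_k}R_{k-1}(x/p_k)=O\big((\log_2 x)^{k-1}/\log x\big)$. Since $p_k\le\sqrt{x}$ forces $x/p_k\ge\sqrt{x}$, one has $R_{k-1}(x/p_k)\ll (\log_2 x)^{k-2}/\log x$ uniformly, and then $\sum_{p_k\le\sqrt{x}}1/p_k\ll\log_2 x$ by Mertens' second theorem, giving the claimed bound. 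In the second sum, replace $V_1(x/(p_1\cdots p_{k-1}))$ by $P_1(\log_2(x/(p_1\cdots p_{k-1})))+R_1(\cdots)$; the main part is exactly $B$, and the error is $\ll \frac{1}{\log\sqrt{x}}\sum_{p_1\cdots p_{k-1}\le\sqrt{x}}\frac{1}{p_1\cdots p_{k-1}}\ll \frac{(\log_2 x)^{k-1}}{\log x}$, again using the inductive hypothesis to bound the inner reciprocal sum by $O((\log_2 x)^{k-1})$. Finally, for the cross term, substitute $V_1(\sqrt{x})=P_1(\log_2\sqrt{x})+O(1/\log x)$ and $V_{k-1}(\sqrt{x})=P_{k-1}(\log_2\sqrt{x})+O((\log_2 x)^{k-2}/\log x)$; expanding the product, the principal piece is exactly $C$, and every remaining piece is $O((\log_2 x)^{k-1}/\log x)$ because $P_{k-1}(\log_2\sqrt{x})\ll(\log_2 x)^{k-1}$ and $P_1(\log_2\sqrt{x})\ll\log_2 x$.

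Collecting the three estimates and absorbing all error terms into a single $O\big((\log_2 x)^{k-1}/\log x\big)$ yields the second displayed equality in the proposition, with $A$, $B$, $C$ as defined; the first displayed equality is then just the definition of $A+B-C$ written out. The main obstacle is bookkeeping: one must check that each error contribution is genuinely of size $(\log_2 x)^{k-1}/\log x$ and not larger, which hinges on the observation that restricting the outer variable to $\le\sqrt{x}$ always leaves the argument of the recursively-estimated sum at least $\sqrt{x}$, so that $1/\log(x/\cdot)\asymp 1/\log x$, and on invoking the inductive hypothesis to bound the relevant partial reciprocal sums of $k-1$ primes by $O((\log_2 x)^{k-1})$. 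No genuinely new idea is needed beyond Corollary~\ref{cor.sep} and careful uniform estimation of the three error pieces.
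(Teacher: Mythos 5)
Your proposal is correct and follows essentially the same route as the paper: apply Corollary~\ref{cor.sep} with $u(p)=1/p$ and $y=\sqrt{x}$, substitute $V_1=P_1+R_1$ and $V_{k-1}=P_{k-1}+R_{k-1}$ from the inductive hypothesis, and absorb the five error pieces using the fact that $p\le\sqrt{x}$ keeps the argument of each remainder at least $\sqrt{x}$, so $\log(x/\cdot)\gg\log x$. The only cosmetic difference is that the paper keeps the factor $1/\log(x/p)$ inside the sum and bounds $\sum_{p\le\sqrt x}\frac{1}{p\log(x/p)}=O(\log_2 x/\log x)$ rather than extracting a uniform bound on $R_{k-1}(x/p)$ first; the estimates are identical.
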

\begin{proof}
Define $V_{1},V_{k-1}:(0,\infty)\to\Bbb{R}$ by
$$V_{1}(x)=\sum_{p\leq x}\frac{1}{p}~~\textrm{and}~~V_{k-1}(x)=\sum_{p_{1}\cdots p_{k-1}\leq x}\frac{1}{p_{1}\cdots p_{k-1}}.$$
Using Corollary \ref{cor.sep} and the simplified notations (\ref{simplified}), we have
\begin{align*}
  &\sum_{p_{1}\cdots{p_{k}}\leq x}\frac{1}{p_{1}\cdots p_{k}}\\
  &=\sum_{p_{1}\leq{\sqrt{x}}}\frac{1}{p_{1}}V_{k-1}\Big(\frac{x}{p_{1}}\Big)
   +\sum_{p_{1}\cdots{p_{k-1}}\leq{\sqrt{x}}}\frac{1}{p_{1}\cdots p_{k-1}}V_{1}\Big(\frac{x}{p_{1}\cdots{p_{k-1}}}\Big)
   -V_{1}(\sqrt{x})V_{k-1}(\sqrt{x})\\
  &=\sum_{p_{1}\leq{\sqrt{x}}}\frac{1}{p_{1}}\left[P_{k-1}\left(\log_{2}\Big(\frac{x}{p_{1}}\Big)\right)
   +R_{k-1}\Big(\frac{x}{p_{1}}\Big)\right]\\
   &\quad+\sum_{p_{1}\cdots{p_{k-1}}\leq{\sqrt{x}}}\frac{1}{p_{1}\cdots p_{k-1}}\left[P_{1}\left(\log_{2}\Big(\frac{x}{p_{1}\cdots{p_{k-1}}}\Big)\right)+R_{1}\Big(\frac{x}{p_{1}\cdots{p_{k-1}}}\Big)\right]\\
   &\quad-\Big({P_{1}(\log_{2}\sqrt{x})+R_{1}(\sqrt{x})}\Big)\Big({P_{k-1}(\log_{2}\sqrt{x})+R_{k-1}(\sqrt{x})}\Big)\\
  &=\sum_{p_{1}\leq{\sqrt{x}}}\frac{1}{p_{1}}P_{k-1}\left(\log_{2}\Big(\frac{x}{p_{1}}\Big)\right)
   +\sum_{p_{1}\cdots{p_{k-1}}\leq{\sqrt{x}}}\frac{1}{p_{1}\cdots p_{k-1}}P_{1}\left(\log_{2}\Big(\frac{x}{p_{1}\cdots{p_{k-1}}}\Big)\right)\\
   &\quad-P_{1}(\log_{2}\sqrt{x})P_{k-1}(\log_{2}\sqrt{x})+R_{k}(x),
\end{align*}
where
\begin{align*}
  R_{k}(x)=&\sum_{p_{1}\leq{\sqrt{x}}}\frac{1}{p_{1}}R_{k-1}\Big(\frac{x}{p_{1}}\Big)
             +\sum_{p_{1}\cdots{p_{k-1}}\leq{\sqrt{x}}}\frac{1}{p_{1}\cdots p_{k-1}}R_{1}\Big(\frac{x}{p_{1}\cdots{p_{k-1}}}\Big)\\
           &-P_{k-1}(\log_{2}\sqrt{x})R_{1}(\sqrt{x})-P_{1}(\log_{2}\sqrt{x})R_{k-1}\left(\sqrt{x}\right)
            -R_{1}(\sqrt{x})R_{k-1}\left(\sqrt{x}\right).
\end{align*}
Since $p_{i}\geq 2~(1\leq i\leq{k-1})$ and
$$\sum_{p_{i}\leq\sqrt{x}}\frac{1}{p_{i}}\frac{1}{\log({x}/{p_{i}})}
          \leq\sum_{p_{i}\leq\sqrt{x}}\frac{1}{p_{i}}\frac{1}{\log({x}/{\sqrt{x}})}
                          =O\left(\frac{\log_{2}x}{\log x}\right),$$
$$\frac{(\log_{2}x)^{k-2}}{\log^{2}x}=O\left(\frac{(\log_{2}x)^{k-1}}{\log x}\right),$$
we obtain
\begin{align*}
  \sum_{p_{1}\leq{\sqrt{x}}}\frac{1}{p_{1}}R_{k-1}\Big(\frac{x}{p_{1}}\Big)
  &=\sum_{p_{1}\leq{\sqrt{x}}}\frac{1}{p_{1}}O\left(\frac{(\log_{2}({x}/{p_{1}}))^{k-2}}{\log({x}/{p_{1}})}\right)
   =O\left(\sum_{p_{1}\leq{\sqrt{x}}}\frac{1}{p_{1}}\frac{(\log_{2}({x}/{p_{1}}))^{k-2}}{\log({x}/{p_{1}})}\right)\\
  &=O\left(\left(\log_{2}({x}/{2})\right)^{k-2}\sum_{p_{1}\leq{\sqrt{x}}}\frac{1}{p_{1}}\frac{1}{\log({x}/{p_{1}})}\right)
   =O\left(\frac{(\log_{2}x)^{k-1}}{\log x}\right)
\end{align*}
and
\begin{align*}
  \sum_{p_{1}\cdots{p_{k-1}}\leq{\sqrt{x}}}&\frac{1}{p_{1}\cdots
        p_{k-1}}R_{1}\Big(\frac{x}{p_{1}\cdots{p_{k-1}}}\Big)
   =\sum_{p_{1}\cdots{p_{k-1}}\leq{\sqrt{x}}}\frac{1}{p_{1}\cdots p_{k-1}}O\left(\frac{1}{\log(\frac{x}{p_{1}\cdots{p_{k-1}}})}\right)\\
  &=O\left(\sum_{p_{1}\cdots{p_{k-1}}\leq{\sqrt{x}}}\frac{1}{p_{1}\cdots p_{k-1}}\frac{1}{\log(\frac{x}{p_{1}\cdots{p_{k-1}}})}\right)
   =O\left(\frac{(\log_{2}x)^{k-1}}{\log x}\right).
\end{align*}
Notice that
\begin{align*}
  &\quad\quad P_{k-1}(\log_{2}\sqrt{x})R_{1}(\sqrt{x})
    =P_{k-1}(\log_{2}\sqrt{x})O\left(\frac{1}{\log x}\right)
    =O\left(\frac{(\log_{2})^{k-1}}{\log x}\right),\\
  &\quad\quad P_{1}(\log_{2}\sqrt{x})R_{k-1}(\sqrt{x})
    =P_{1}(\log_{2}\sqrt{x})O\left(\frac{(\log_{2}x)^{k-2}}{\log x}\right)
    =O\left(\frac{(\log_{2}x)^{k-1}}{\log x}\right),\\
  &\quad\quad R_{1}(\sqrt{x})R_{k-1}(\sqrt{x})
    =O\left(\frac{1}{\log x}\right)O\left(\frac{(\log_{2}x)^{k-2}}{\log x}\right)
    =O\left(\frac{(\log_{2}x)^{k-1}}{\log x}\right),
\end{align*}
we have
\begin{equation*}
   R_{k}(x)=O\left(\frac{(\log_{2}x)^{k-1}}{\log x}\right),
\end{equation*}
which completes the proof.
\end{proof}

To evaluate the terms on the right hand side of (\ref{proposition 3.1}), we need the following propositions.

\begin{proposition}\label{1-lnp/lnx}
The following evaluation holds
\begin{equation*}
  \sum_{p\leq\sqrt{x}}\frac{1}{p}\left[\log\Big(1-\frac{\log p}{\log x}\Big)\right]^{m}
  =\int_{0+0}^{\frac{1}{2}}\frac{\log^{m}(1-x)}{x}dx+O\left(\frac{1}{\log x}\right).
\end{equation*}
\end{proposition}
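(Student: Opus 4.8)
The plan is to convert the prime sum into the integral on the right by Abel's summation formula, using Mertens' second theorem to control $\sum_{p\le t}\tfrac1p$. Put $g(t):=\log^{m}(1-\tfrac{\log t}{\log x})$; since $1-\tfrac{\log t}{\log x}\ge\tfrac12$ for $t\in[2,\sqrt x]$, $g$ is $C^{1}$ there. Writing $A(t)=\sum_{p\le t}\tfrac1p$ and inserting Mertens' second theorem in the form $A(t)=\log_{2}t+B+E(t)$ with $E(t)=O(1/\log t)$, partial summation gives
\[
\sum_{p\le\sqrt x}\frac{g(p)}{p}=g(\sqrt x)A(\sqrt x)-\int_{2}^{\sqrt x}A(t)g'(t)\,dt=M+\mathcal{E},
\]
where $M:=g(\sqrt x)(\log_{2}\sqrt x+B)-\int_{2}^{\sqrt x}(\log_{2}t+B)g'(t)\,dt$ and $\mathcal{E}:=g(\sqrt x)E(\sqrt x)-\int_{2}^{\sqrt x}E(t)g'(t)\,dt$.

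For the main term, a second integration by parts (using $(\log_{2}t+B)'=1/(t\log t)$) cancels the two boundary terms at $t=\sqrt x$ and leaves
\[
M=(\log_{2}2+B)g(2)+\int_{2}^{\sqrt x}\frac{g(t)}{t\log t}\,dt .
\]
Because $|\log(1-y)|\le 2y$ for small $y\ge0$, $g(2)=O((\log x)^{-m})=O(1/\log x)$. In the remaining integral I substitute $u=\tfrac{\log t}{\log x}$, so that $\tfrac{dt}{t\log t}=\tfrac{du}{u}$ and $g(t)=\log^{m}(1-u)$, obtaining
\[
\int_{2}^{\sqrt x}\frac{g(t)}{t\log t}\,dt=\int_{\log 2/\log x}^{1/2}\frac{\log^{m}(1-u)}{u}\,du=\int_{0+0}^{1/2}\frac{\log^{m}(1-u)}{u}\,du+O((\log x)^{-m}),
\]
the last equality holding because the omitted piece over $(0,\log 2/\log x]$ is $O(\int_{0}^{\log 2/\log x}u^{m-1}\,du)$. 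Hence $M=\int_{0+0}^{1/2}\tfrac{\log^{m}(1-u)}{u}\,du+O(1/\log x)$.

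It remains to prove $\mathcal{E}=O(1/\log x)$, and this is the crux. The boundary term is harmless, since $g(\sqrt x)=(-\log 2)^{m}$ and $E(\sqrt x)=O(1/\log x)$; so everything hinges on $\int_{2}^{\sqrt x}E(t)g'(t)\,dt$, where $g'(t)=-m\log^{m-1}(1-\tfrac{\log t}{\log x})\cdot\tfrac{1}{t\log(x/t)}$ and $\log(x/t)\ge\tfrac12\log x$ on $[2,\sqrt x]$. When $m\ge2$, the factor $\log^{m-1}(1-\tfrac{\log t}{\log x})$ is $\ll(\log t/\log x)^{m-1}$, which supplies enough decay that the trivial bound $|E(t)|\ll 1/\log t$ already gives $\int_{2}^{\sqrt x}|E(t)g'(t)|\,dt\ll(\log x)^{-m}\int_{2}^{\sqrt x}(\log t)^{m-2}\,\tfrac{dt}{t}\ll1/\log x$. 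The delicate case, and the main obstacle, is $m=1$: then $g'(t)=-1/(t\log(x/t))$ carries no such extra smallness, and the trivial bound only yields $O(\log_{2}x/\log x)$. To recover $O(1/\log x)$ I would integrate by parts once more against $W(t):=\int_{2}^{t}E(s)\,\tfrac{ds}{s}$, the decisive point being that $W$ is \emph{bounded}: indeed
\[
W(X)=\int_{2}^{X}\frac1t\sum_{p\le t}\frac1p\,dt-\int_{2}^{X}\frac{\log_{2}t+B}{t}\,dt=\Big(\log X\sum_{p\le X}\frac1p-\sum_{p\le X}\frac{\log p}{p}\Big)-\int_{2}^{X}\frac{\log_{2}t+B}{t}\,dt,
\]
and Mertens' first theorem $\sum_{p\le X}\tfrac{\log p}{p}=\log X+O(1)$ together with Mertens' second theorem makes the $\log X\log_{2}X$ and $\log X$ terms cancel, leaving $W(X)=O(1)$. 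Granting this, $\int_{2}^{\sqrt x}E(t)g'(t)\,dt=\int_{2}^{\sqrt x}\bigl(tg'(t)\bigr)\,dW(t)$ has boundary contribution $\bigl[(tg'(t))W(t)\bigr]_{2}^{\sqrt x}=O(1/\log x)$ (because $tg'(t)=O(1/\log x)$ at $t=\sqrt x$ and $W(2)=0$) and remaining integral $-\int_{2}^{\sqrt x}W(t)\,\bigl(tg'(t)\bigr)'\,dt\ll\int_{2}^{\sqrt x}\tfrac{dt}{t\log^{2}(x/t)}\ll1/\log x$, since $\bigl(tg'(t)\bigr)'\ll1/(t\log^{2}(x/t))$ on $[2,\sqrt x]$. (This same pair of integrations by parts in fact disposes of $\mathcal{E}$ uniformly in $m\ge1$.) Combining with the evaluation of $M$ completes the proof.
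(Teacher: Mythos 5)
Your proof is correct, but it takes a genuinely different route from the paper's. The paper expands $\log^{m}(1-x)=(-1)^{m}\sum_{k\ge 1}b_{k}x^{k}$ via the multinomial theorem and then applies, term by term, Popa's moment evaluation $\sum_{p\le\sqrt{x}}\frac{1}{p}(\frac{\log p}{\log x})^{k}=\frac{1}{k\cdot 2^{k}}+\frac{1}{2^{k-1}}O(\frac{1}{\log x})$; the only analytic point is the convergence of $\sum_{k}b_{k}2^{1-k}$ (checked by an auxiliary integral), after which $\sum_{k}b_{k}/(k2^{k})$ is identified with the integral on the right. You instead apply Abel summation directly against $\sum_{p\le t}\frac{1}{p}=\log_{2}t+B+E(t)$, recover the main term as $\int_{2}^{\sqrt{x}}g(t)\,\frac{dt}{t\log t}$ via a second integration by parts and the substitution $u=\log t/\log x$, and then handle the error by one more summation by parts against $W(t)=\int_{2}^{t}E(s)\,\frac{ds}{s}$, whose boundedness you correctly derive by playing Mertens' first theorem off against the second. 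Both arguments are sound. The paper's version outsources all the analysis to a single uniform-in-$k$ estimate and is otherwise formal bookkeeping; yours is self-contained given the two Mertens theorems and makes visible exactly where the saving from $O(\log_{2}x/\log x)$ to $O(1/\log x)$ comes from in the delicate case $m=1$ (the bounded integrated remainder $W$), a point the series approach buries inside the convergence of $\sum_{k}b_{k}2^{1-k}$. One small remark: your trivial bound already settles $m\ge 2$, so the $W$-device is only truly needed for $m=1$, though as you note it works uniformly.
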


\begin{proof}
Using the Generalized Newton binomial formula, we have
\begin{equation*}
  \begin{split}
     \left(x+\frac{x^{2}}{2}+\frac{x^{3}}{3}+\cdots\right)^{m}
     =\sum_{k=1}^{\infty}\sum_{\substack{k_{1}+\cdots+k_{m}=k\\k_{1},\cdots,k_{m}\geq{1}}}\dbinom{k}{k_{1}~\cdots~k_{m}}\frac{x^{k_{1}}}{k_{1}}\cdots\frac{x^{k_{m}}}{k_{m}}
     =\sum_{k=1}^{\infty}b_{k}x^{k}
  \end{split}
\end{equation*}
where $b_{1}=\cdots=b_{m-1}=0$, and
$$b_{k}=\sum_{\substack{k_{1}+\cdots+k_{m}=k\\k_{1},\cdots,k_{m}\geq{1}}}\dbinom{k}{k_{1}~\cdots~k_{m}}\frac{1}{k_{1}\cdots{k_{m}}}.$$
Thus for any $|x|<{1}$, we have $log^{m}(1-x)=(-1)^{m}\sum_{k=1}^{\infty}b_{k}x^{k}$ and from the evaluation (see \cite[Proposition 1]{Popa2})
$$\sum_{p\leq\sqrt{x}}\frac{1}{p}\left(\frac{\log p}{\log x}\right)^{k}
=\frac{1}{k\cdot2^{k}}+\frac{1}{2^{k-1}}O\left(\frac{1}{\log x}\right),$$
we obtain
\begin{equation}\label{mid}
\begin{split}
  \sum_{p\leq\sqrt{x}}\frac{1}{p}\left[\log\left(1-\frac{\log p}{\log x}\right)\right]^{m}
    &=(-1)^{m}\sum_{k=1}^{\infty}b_{k}\sum_{p\leq\sqrt{x}}
       \frac{1}{p}\left(\frac{\log p}{\log x}\right)^{k}\\
    &=(-1)^{m}\sum_{k=1}^{\infty}b_{k}\left(\frac{1}{k\cdot2^{k}}+\frac{1}{2^{k-1}}
       O\Big(\frac{1}{\log x}\Big)\right)\\
    &=(-1)^{m}\left(\sum_{k=1}^{\infty}\frac{b_{k}}{k\cdot2^{k}}
       +\left(\sum_{k=1}^{\infty}\frac{b_{k}}{2^{k-1}}\right)
       O\left(\frac{1}{\log x}\right)\right).
\end{split}
\end{equation}
Since
\begin{align*}
  \int_{0+0}^{\frac{1}{2}}\left(\frac{\log^{m}(1-x)}{x}\right)'dx
    &=(-1)^{m}\int_{0+0}^{\frac{1}{2}}\sum_{k=2}^{\infty}b_{k}(k-1)x^{k-2}dx\\
    &=(-1)^{m}\sum_{k=2}^{\infty}b_{k}(k-1)\int_{0+0}^{\frac{1}{2}}x^{k-2}dx\\
    &=(-1)^{m}\sum_{k=2}^{\infty}\frac{b_{k}}{2^{k-1}},
\end{align*}
 we see that the series $\sum_{k=1}^{\infty}\frac{b_{k}}{2^{k-1}}$ is convergent.
 Finally substituting the following integral into (\ref{mid})
 \begin{align*}
   \int_{0+0}^{\frac{1}{2}}\frac{\log^{m}(1-x)}{x}dx
    &=(-1)^{m}\int_{0+0}^{\frac{1}{2}}\sum_{k=1}^{\infty}b_{k}x^{k-1}dx
    =(-1)^{m}\sum_{k=1}^{\infty}b_{k}\int_{0+0}^{\frac{1}{2}}x^{k-1}dx\\
    &=(-1)^{m}\sum_{k=1}^{\infty}b_{k}\frac{1}{k\cdot2^{k}},
 \end{align*}
we obtain
\begin{equation*}
  \sum_{p\leq\sqrt{x}}\frac{1}{p}\left[\log\Big(1-\frac{\log p}{\log x}\Big)\right]^{m}
  =\int_{0+0}^{\frac{1}{2}}\frac{\log^{m}(1-x)}{x}dx+O\left(\frac{1}{\log x}\right),
\end{equation*}
which is the desired result.
\end{proof}

\begin{proposition}\label{lnx/p}
The following evaluation holds
\begin{equation*}
\begin{split}
  \sum_{p\leq\sqrt{x}}\frac{1}{p}\left[\log_{2}\Big(\frac{x}{p}\Big)\right]^{m}
  =&(\log_{2}x)^{m}P_{1}(\log_{2}\sqrt{x})
   +\sum_{t=1}^{m}C^{t}_{m}(\log_{2}x)^{m-t}
   \int_{0+0}^{\frac{1}{2}}\frac{\log^{t}(1-x)}{x}dx\\
  &+O\left(\frac{(\log_{2}x)^{m}}{\log x}\right).
\end{split}
\end{equation*}
\end{proposition}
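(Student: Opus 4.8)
The plan is to reduce the whole sum to Proposition~\ref{1-lnp/lnx} by means of the elementary identity
\[
\log_{2}\Big(\frac{x}{p}\Big)=\log\bigl(\log x-\log p\bigr)=\log_{2}x+\log\Big(1-\frac{\log p}{\log x}\Big),
\]
which makes sense for every prime $p\leq\sqrt{x}$, since then $0<\tfrac{\log p}{\log x}\leq\tfrac{1}{2}<1$, so that $1-\tfrac{\log p}{\log x}\in[\tfrac{1}{2},1)$ and all the logarithms below are well defined. Raising this to the $m$-th power and expanding by the (finite) binomial theorem gives
\[
\Big[\log_{2}\Big(\frac{x}{p}\Big)\Big]^{m}=\sum_{t=0}^{m}C_{m}^{t}(\log_{2}x)^{m-t}\Big[\log\Big(1-\frac{\log p}{\log x}\Big)\Big]^{t}.
\]
Since the sum over $t$ has only $m+1$ terms, it may be interchanged with $\sum_{p\leq\sqrt{x}}\frac{1}{p}$ with no convergence concern, yielding
\[
\sum_{p\leq\sqrt{x}}\frac{1}{p}\Big[\log_{2}\Big(\frac{x}{p}\Big)\Big]^{m}=\sum_{t=0}^{m}C_{m}^{t}(\log_{2}x)^{m-t}\sum_{p\leq\sqrt{x}}\frac{1}{p}\Big[\log\Big(1-\frac{\log p}{\log x}\Big)\Big]^{t}.
\]

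For the term $t=0$ the inner sum is $\sum_{p\leq\sqrt{x}}\frac{1}{p}$, which by Mertens' second theorem (\ref{Mertens}) equals $\log_{2}\sqrt{x}+B+O(1/\log x)=P_{1}(\log_{2}\sqrt{x})+O(1/\log x)$; hence this term contributes $(\log_{2}x)^{m}P_{1}(\log_{2}\sqrt{x})+O\bigl((\log_{2}x)^{m}/\log x\bigr)$. For each $t$ with $1\leq t\leq m$, Proposition~\ref{1-lnp/lnx} gives
\[
\sum_{p\leq\sqrt{x}}\frac{1}{p}\Big[\log\Big(1-\frac{\log p}{\log x}\Big)\Big]^{t}=\int_{0+0}^{\frac{1}{2}}\frac{\log^{t}(1-x)}{x}dx+O\Big(\frac{1}{\log x}\Big),
\]
so the $t$-th term contributes $C_{m}^{t}(\log_{2}x)^{m-t}\int_{0+0}^{\frac{1}{2}}\frac{\log^{t}(1-x)}{x}dx+O\bigl((\log_{2}x)^{m-t}/\log x\bigr)$.

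Adding the contributions of $t=0,1,\dots,m$ produces precisely the main term in the statement, while the accumulated error is
\[
O\Bigl(\frac{(\log_{2}x)^{m}}{\log x}\Bigr)+\sum_{t=1}^{m}O\Bigl(\frac{(\log_{2}x)^{m-t}}{\log x}\Bigr)=O\Bigl(\frac{(\log_{2}x)^{m}}{\log x}\Bigr),
\]
since $(\log_{2}x)^{m-t}\leq(\log_{2}x)^{m}$ for all large $x$. I do not expect a substantial obstacle here: the only points that need care are verifying that $1-\log p/\log x$ stays inside $(0,1)$ on the range $p\leq\sqrt{x}$, so that Proposition~\ref{1-lnp/lnx} applies verbatim, and keeping track of the powers of $\log_{2}x$ when collecting the error, so that the dominant contribution $(\log_{2}x)^{m}/\log x$ coming from the $t=0$ term absorbs all the others.
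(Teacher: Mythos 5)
Your proposal is correct and follows essentially the same route as the paper: write $\log_{2}(x/p)=\log_{2}x+\log\bigl(1-\tfrac{\log p}{\log x}\bigr)$, expand by the binomial theorem, handle the $t=0$ term via Mertens' second theorem (i.e.\ $\sum_{p\le\sqrt{x}}1/p=P_{1}(\log_{2}\sqrt{x})+O(1/\log x)$), and apply Proposition~\ref{1-lnp/lnx} to each $t\ge 1$ term before collecting the errors into $O\bigl((\log_{2}x)^{m}/\log x\bigr)$. No gaps; your explicit check that $1-\log p/\log x\in[\tfrac12,1)$ on the range $p\le\sqrt{x}$ is a point the paper leaves implicit.
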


\begin{proof}
Using Newton binomial formula, we obtain
\begin{align*}
  \sum_{p\leq\sqrt{x}}\frac{1}{p}\left[\log_{2}\Big(\frac{x}{p}\Big)\right]^{m}
    =&\sum_{p\leq\sqrt{x}}\frac{1}{p}\Big(\log(\log x-\log p)-\log_{2}x+\log_{2}x\Big)^{m}\\
    =&\sum_{p\leq\sqrt{x}}\frac{1}{p}\left(\log_{2}x+\log\Big(1-\frac{\log p}{\log x}\Big)\right)^{m}\\
    =&\sum_{p\leq\sqrt{x}}\frac{1}{p}\sum_{t=0}^{m}C_{m}^{t}(\log_{2}x)^{m-t}
     \left(\log\Big(1-\frac{\log p}{\log x}\Big)\right)^{t}\\
    =&(\log_{2}x)^{m}\sum_{p\leq\sqrt{x}}\frac{1}{p}+\sum_{t=1}^{m}C_{m}^{t}(\log_{2}x)^{m-t}
      \sum_{p\leq\sqrt{x}}\frac{1}{p}\left(\log\Big(1-\frac{\log p}{\log x}\Big)\right)^{t}.
\end{align*}
By Proposition \ref{1-lnp/lnx}, we have
\begin{align*}
 \sum_{p\leq\sqrt{x}}\frac{1}{p}\left[\log_{2}\Big(\frac{x}{p}\Big)\right]^{m}
    =&(\log_{2}x)^{m}\left(P_{1}(\log_{2}\sqrt{x})+O\Big(\frac{1}{\log x}\Big)\right)\\
     &+\sum_{t=1}^{m}C_{m}^{t}(\log_{2}x)^{m-t}\left(\int_{0+0}^{\frac{1}{2}}
     \frac{\log^{t}(1-x)}{x}dx+O\Big(\frac{1}{\log x}\Big)\right)\\
    =&(\log_{2}x)^{m}P_{1}(\log_{2}\sqrt{x})+\sum_{t=1}^{m}C_{m}^{t}(\log_{2}x)^{m-t}
     \int_{0+0}^{\frac{1}{2}}\frac{\log^{t}(1-x)}{x}dx\\
      &+O\left(\frac{(\log_{2}x)^{m}}{\log x}\right),
\end{align*}
which completes our proof.
\end{proof}
Using Proposition \ref{1-lnp/lnx} and Proposition \ref{lnx/p}, we can evaluate the sum $A$ on the right hand side of (\ref{proposition 3.1}).

\begin{proposition}\label{prop.A}
The following evaluation holds
\begin{equation*}
  \begin{split}
    A=&\sum_{p_{1}\leq{\sqrt{x}}}\frac{1}{p_{1}}P_{k-1}
      \left(\log_{2}\Big(\frac{x}{p_{1}}\Big)\right)\\
     =&P_{k-1}(\log_{2}x)P_{1}(\log_{2}\sqrt{x})
      +\sum_{c=1}^{k-1}C_{k-1}^{c}P_{k-1-c}(\log_{2}x)
       \int_{0+0}^{\frac{1}{2}}\frac{\log^{c}(1-x)}{x}dx\\
      &+O\left(\frac{(\log_{2}x)^{k-1}}{\log x}\right).
  \end{split}
\end{equation*}
\end{proposition}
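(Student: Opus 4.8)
The plan is to expand $P_{k-1}$ as a polynomial in $\log_2(x/p_1)$ and reduce the evaluation of $A$ to the sum computed in Proposition~\ref{lnx/p}. Recall from (\ref{simplified}) that $P_{k-1}(y)=(y+B)^{k-1}+\sum_{m=2}^{k-1}C_{k-1}^{m}a_{m}(y+B)^{k-1-m}$, so $P_{k-1}$ is a polynomial of degree $k-1$ in its argument; write $P_{k-1}(y)=\sum_{j=0}^{k-1}c_j y^j$ with the $c_j$ absolute constants (depending only on $k$ and the $a_m$, $B$). Then
\[
A=\sum_{j=0}^{k-1}c_j\sum_{p_1\le\sqrt{x}}\frac{1}{p_1}\Big[\log_2\Big(\frac{x}{p_1}\Big)\Big]^{j}.
\]
First I would apply Proposition~\ref{lnx/p} with $m=j$ to each inner sum. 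This gives
\[
A=\sum_{j=0}^{k-1}c_j\Big[(\log_2 x)^j P_1(\log_2\sqrt{x})+\sum_{t=1}^{j}C_j^t(\log_2 x)^{j-t}\int_{0+0}^{1/2}\frac{\log^t(1-x)}{x}\,dx\Big]+O\Big(\frac{(\log_2 x)^{k-1}}{\log x}\Big),
\]
where I have bounded each error term $O((\log_2 x)^j/\log x)$ by $O((\log_2 x)^{k-1}/\log x)$ since $j\le k-1$; the $j=0$ term contributes only the error $O(1/\log x)$ via the $m=0$ case (directly Mertens' second theorem).

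Next I would reorganize the main terms. The first group of terms is $\big(\sum_j c_j(\log_2 x)^j\big)P_1(\log_2\sqrt{x})=P_{k-1}(\log_2 x)P_1(\log_2\sqrt{x})$, which is exactly the first summand in the claimed formula. For the remaining double sum, I would interchange the order of summation, summing over $c$ (the exponent $t$) on the outside: the coefficient of $\int_{0+0}^{1/2}\frac{\log^c(1-x)}{x}\,dx$ becomes $\sum_{j\ge c}c_j C_j^c(\log_2 x)^{j-c}$. The key identity to verify is that this equals $C_{k-1}^{c}P_{k-1-c}(\log_2 x)$. This is a standard fact about the coefficients of a polynomial shifted and truncated: if $P_{k-1}(y)=\sum_j c_j y^j$, then $\sum_{j\ge c}c_j C_j^c y^{j-c}=\frac{1}{c!}P_{k-1}^{(c)}(y)$, the $c$-th (normalized) derivative; and one checks from the explicit form of $P_s$ that $\frac{1}{c!}P_{k-1}^{(c)}(y)=C_{k-1}^c P_{k-1-c}(y)$, either directly from (\ref{simplified}) or, more cleanly, using the recursion for the $a_m$ in (\ref{an}). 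Using the relation $P_s(y)=P_1(y)^s+\sum_{m=2}^s C_s^m a_m P_1(y)^{s-m}$ noted in the text reduces this to the binomial identity $\frac{1}{c!}\frac{d^c}{dy^c}(y+B)^{s}=C_s^c(y+B)^{s-c}$ together with bookkeeping on the $a_m$ coefficients.

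The main obstacle is precisely this coefficient identity $\sum_{j\ge c}c_j C_j^c y^{j-c}=C_{k-1}^c P_{k-1-c}(y)$; everything else is routine manipulation of Proposition~\ref{lnx/p} and absorbing error terms. I would handle it by working with the "derivative" characterization: since $P_{k-1}(y-a)=\sum_{t=0}^{k-1}C_{k-1}^t(-1)^t a^t P_{k-1-t}(y)$ is already recorded in the excerpt, differentiating this identity in $y$ (or comparing Taylor coefficients at a shifted point) yields the needed relation between $P_{k-1}^{(c)}$ and $P_{k-1-c}$ without having to re-derive the structure of the $a_m$ from scratch. Once the identity is in hand, collecting terms gives exactly
\[
A=P_{k-1}(\log_2 x)P_1(\log_2\sqrt{x})+\sum_{c=1}^{k-1}C_{k-1}^c P_{k-1-c}(\log_2 x)\int_{0+0}^{1/2}\frac{\log^c(1-x)}{x}\,dx+O\Big(\frac{(\log_2 x)^{k-1}}{\log x}\Big),
\]
which is the assertion of Proposition~\ref{prop.A}.
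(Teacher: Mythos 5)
Your proposal is correct and follows essentially the same route as the paper: expand $P_{k-1}$ as a polynomial in $\log_{2}(x/p_{1})$, apply Proposition \ref{lnx/p} termwise, absorb the error terms, and recollect the coefficients of each integral $\int_{0+0}^{1/2}\frac{\log^{c}(1-x)}{x}\,dx$. Your key identity $\tfrac{1}{c!}P_{k-1}^{(c)}(y)=C_{k-1}^{c}P_{k-1-c}(y)$ is valid (it reduces to $C_{k-1}^{m}C_{k-1-m}^{c}=C_{k-1}^{c}C_{k-1-c}^{m}$) and is just a cleaner packaging of the explicit double binomial recombination the paper carries out.
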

\begin{proof}
Using the simplified notations (\ref{simplified}) and Newton binomial formula, we obtain
\begin{align*}
  A=&\sum_{p_{1}\leq{\sqrt{x}}}\frac{1}{p_{1}}\left[\left(\log_{2}\Big(\frac{x}{p_{1}}\Big)+B\right)^{k-1}
     +\sum_{m=2}^{k-1}C_{k-1}^{m}a_{m}\left(\log_{2}\Big(\frac{x}{p_{1}}\Big)+B\right)^{k-1-m}\right]\\
     =&\sum_{p_{1}\leq{\sqrt{x}}}\frac{1}{p_{1}}\left[\sum_{t=0}^{k-1}C_{k-1}^{t}B^{t}\left(\log_{2}\Big(\frac{x}{p_{1}}\Big)\right)^{k-1-t}\right.\\
      &\quad\quad\left.+\sum_{m=2}^{k-1}C_{k-1}^{m}a_{m}\sum_{s=0}^{k-1-m}C_{k-1-m}^{s}B^{s}\left(\log_{2}\Big(\frac{x}{p_{1}}\Big)\right)^{k-1-m-s}\right]\\
     =&\sum_{t=0}^{k-1}C_{k-1}^{t}B^{t}\sum_{p_{1}\leq{\sqrt{x}}}\frac{1}{p_{1}}\left(\log_{2}\Big(\frac{x}{p_{1}}\Big)\right)^{k-1-t}\\
      &+\sum_{m=2}^{k-1}C_{k-1}^{m}a_{m}\sum_{s=0}^{k-1-m}C_{k-1-m}^{s}B^{s}\sum_{p_{1}\leq{\sqrt{x}}}\frac{1}{p_{1}}
       \left(\log_{2}\Big(\frac{x}{p_{1}}\Big)\right)^{k-1-m-s},
\end{align*}
Applying Proposition \ref{lnx/p} we have
\begin{align*}
   A=&\sum_{t=0}^{k-1}C_{k-1}^{t}B^{t}\Bigg[(\log_{2}x)^{k-1-t}P_{1}(\log_{2}\sqrt{x})\\
        &\quad\quad+\sum_{c=1}^{k-1-t}C_{k-1-t}^{c}(\log_{2}x)^{k-1-t-c}
         \int_{0+0}^{\frac{1}{2}}\frac{\log^{c}(1-x)}{x}dx
          +O\left(\frac{(\log_{2}x)^{k-1-t}}{\log x}\right)\Bigg]\\
        &+\sum_{m=2}^{k-1}C_{k-1}^{m}a_{m}\sum_{s=0}^{k-1-m}C_{k-1-m}^{s}B^{s}
         \Bigg[(\log_{2}x)^{k-1-m-s}P_{1}(\log_{2}\sqrt{x})\\
      &\quad\quad+\sum_{c=1}^{k-1-m-s}C_{k-1-m-s}^{c}(\log_{2}x)^{k-1-m-s-c}
       \int_{0+0}^{\frac{1}{2}}\frac{\log^{c}(1-x)}{x}dx\\
        &\quad\quad+O\left(\frac{(\log_{2}x)^{k-1-m-s}}{\log x}\right)\Bigg]\\
     =&\sum_{t=0}^{k-1}C_{k-1}^{t}B^{t}(\log_{2}x)^{k-1-t}P_{1}(\log_{2}\sqrt{x})\\
      &+\sum_{m=2}^{k-1}C_{k-1}^{m}a_{m}\sum_{s=0}^{k-1-m}C_{k-1-m}^{s}B^{s}(\log_{2}x)^{k-1-m-s}P_{1}(\log_{2}\sqrt{x})\\
      &+\sum_{t=0}^{k-1}C_{k-1}^{t}B^{t}\sum_{c=1}^{k-1-t}C_{k-1-t}^{c}(\log_{2}x)^{k-1-t-c}\int_{0+0}^{\frac{1}{2}}\frac{\log^{c}(1-x)}{x}dx\\
      &+\sum_{m=2}^{k-1}C_{k-1}^{m}a_{m}\sum_{s=0}^{k-1-m}C_{k-1-m}^{s}B^{s}
        \sum_{c=1}^{k-1-m-s}C_{k-1-m-s}^{c}(\log_{2}x)^{k-1-m-s-c}\int_{0+0}^{\frac{1}{2}}\frac{\log^{c}(1-x)}{x}dx\\
      &+O\left(\frac{(\log_{2}x)^{k-1}}{\log x}\right)\\
    =&\left((\log_{2}x+B)^{k-1}+\sum_{m=2}^{k-1}C_{k-1}^{m}a_{m}(\log_{2}x+B)^{k-1-m}\right)P_{1}(\log_{2}\sqrt{x})\\
      &+\sum_{c=1}^{k-1}C_{k-1}^{c}\left(\sum_{t=0}^{k-1-c}C_{k-1-c}^{t}B^{t}(\log_{2}x)^{k-1-c-t}\right)\int_{0+0}^{\frac{1}{2}}\frac{\log^{c}(1-x)}{x}dx\\
      &+\sum_{c=1}^{k-1}C_{k-1}^{c}\sum_{m=2}^{k-1-c}C_{k-1-c}^{m}a_{m}
         \left(\sum_{s=0}^{k-1-c-m}C_{k-1-c-m}^{s}B^{s}(\log_{2}x)^{k-1-c-m-s}\right)\int_{0+0}^{\frac{1}{2}}\frac{\log^{c}(1-x)}{x}dx\\
        &+O\left(\frac{(\log_{2}x)^{k-1}}{\log x}\right)\\
    =&\left((\log_{2}x+B)^{k-1}+\sum_{m=2}^{k-1}C_{k-1}^{m}a_{m}(\log_{2}x+B)^{k-1-m}\right)P_{1}(\log_{2}\sqrt{x})\\
        &+\sum_{c=1}^{k-1}C_{k-1}^{c}\left((\log_{2}x+B)^{k-1-c}+\sum_{m=2}^{k-1-c}C_{k-1-c}^{m}a_{m}(\log_{2}x+B)^{k-1-c-m}\right)
          \int_{0+0}^{\frac{1}{2}}\frac{\log^{c}(1-x)}{x}dx\\
        &+O\left(\frac{(\log_{2}x)^{k-1}}{\log x}\right).
\end{align*}
The result is now seen from the simplified notations (\ref{simplified}).
\end{proof}

Now we at the position to prove Theorem \ref{lem.B}.

\noindent\textbf{Proof of Theorem \ref{lem.B}}
The method used here is analogue to that in \cite[Theorem 3.3]{B\u{a}nescu}. If $s=1$, then first we claim that
\begin{align*}
   \sum_{p_{1}\cdots{p_{k}}\leq{x}}&\frac{\log(p_{1}\cdots p_{k})}{p_{1}\cdots p_{k}}=f(x)+A(x),
\end{align*}
where
\begin{align*}
   f(x)=\sum_{l=0}^{k-1}(-1)^{l}A_{k}^{l+1}P_{k-1-l}(\log_{2}x)\cdot \log x
\end{align*}
and \begin{equation}\label{A(x)}A(x)=O\left((\log_{2}x)^{k}\right).\end{equation}

In fact, by the Abel summation formula (see \cite[p. 5]{B\u{a}nescu}), we have
\begin{align*}
  &\sum_{p_{1}\cdots{p_{k}}\leq{x}}\frac{\log(p_{1}\cdots p_{k})}{p_{1}\cdots p_{k}}\\
    &=\left(\sum_{p_{1}\cdots{p_{k}}\leq{x}}\frac{1}{p_{1}\cdots p_{k}}\right)\log x
       -\int_{2}^{x}\left(\sum_{p_{1}\cdots{p_{k}}\leq{t}}\frac{1}{p_{1}\cdots p_{k}}\right)(\log t)^{'}dt\\
    &=\left(P_{k}(\log_{2}x)+R_{k}(x)\right)\log x
    -\int_{2}^{x}\left(P_{k}(\log_{2}t)+R_{k}(t)\right)(\log t)^{'}dt\\
    &=P_{k}(\log_{2}x)\log x+O\left((\log_{2}x)^{k-1}\right)-\left(\log t\cdot
       P_{k}(\log_{2}t)\Big|_{2}^{x}-\int_{2}^{x}\frac{P_{k}^{'}(\log_{2}t)}{t}dt\right)\\
       &\quad-\int_{2}^{x}R_{k}(t)(\log t)^{'}dt\\
    &=\int_{2}^{x}\frac{P_{k}^{'}(\log_{2}t)}{t}dt+\log 2\cdot P_{k}(\log_{2}2)-\int_{2}^{x}R_{k}(t)(\log t)^{'}dt+O\left((\log_{2}x)^{k-1}\right)\\
    &=\int_{\log2}^{\log x}P_{k}^{'}(\log t)dt+O\left((\log_{2}x)^{k}\right),
\end{align*}
since the inner term
\begin{align*}
  \int_{2}^{x}R_{k}(t)(\log t)^{'}dt
  &=\int_{2}^{x}O\left(\frac{(\log_{2}t)^{k-1}}{\log t}\right)(\log t)^{'}dt
   =O\left(\int_{2}^{x}\frac{(\log_{2}t)^{k-1}}{t\log t}dt\right)\\
  &=O\left(\int_{\log2}^{\log x}\frac{\log^{k-1}t}{t}dt\right)
   =O\left(\int_{\log2}^{\log x}\log^{k-1}td(\log t)\right)\\
  &=O\left(\frac{\log^{k}t}{k}\Big|_{\log2}^{\log x}\right)
   =O\left((\log_{2}x)^{k}\right).
\end{align*}
Note that
\begin{align*}
  P_{k}^{'}(y)&=\left((y+B)^{k}+\sum_{m=2}^{k}C_{k}^{m}a_{m}(y+B)^{k-m}\right)^{'}\\
            &=k(y+B)^{k-1}+\sum_{m=2}^{k-1}C_{k}^{m}a_{m}(k-m)(y+B)^{k-1-m}\\
            &=k\left((y+B)^{k-1}+\sum_{m=2}^{k}C_{k-1}^{m}a_{m}(y+B)^{k-1-m}\right).
\end{align*}
Hence
\begin{align*}
  \int_{\log2}^{\log x}P_{k}^{'}(\log t)dt
  &=k\int_{\log2}^{\log x}\left((\log t+B)^{k-1}
    +\sum_{m=2}^{k}C_{k-1}^{m}a_{m}(\log t+B)^{k-1-m}\right)dt
 \end{align*}
 \begin{equation}\label{p'}
   \begin{split}
     &\qquad\qquad=k\int_{\log2}^{\log x}\left(\sum_{s_{1}=0}^{k-1}C_{k-1}^{s_{1}}B^{s_{1}}(\log t)^{k-1-s_{1}}\right.\\
     &\qquad\qquad\quad\quad\left.+\sum_{m=2}^{k}C_{k-1}^{m}a_{m}\sum_{s_{2}=0}^{k-1-m}C_{k-1-m}^{s_{2}}B^{s_{2}}(\log t)^{k-1-m-s_{2}}\right)dt\\
  &\qquad\qquad=k\left(\sum_{s_{1}=0}^{k-1}C_{k-1}^{s_{1}}B^{s_{1}}\int_{\log2}^{\log x}(\log t)^{k-1-s_{1}}dt\right.\\
     &\qquad\qquad\quad\quad\left.+\sum_{m=2}^{k}C_{k-1}^{m}a_{m}
     \sum_{s_{2}=0}^{k-1-m}C_{k-1-m}^{s_{2}}B^{s_{2}}\int_{\log2}^{\log x}(\log t)^{k-1-m-s_{2}}dt\right).
   \end{split}
 \end{equation}
By the indefinite integral $$\int \log^{a}tdt=\sum_{l=0}^{a}(-1)^{l}A_{a}^{l}t(\log t)^{a-l}+\textrm{constant},$$ the first term on the right hand side of (\ref{p'}) becomes to
\begin{align*}
  &\sum_{s_{1}=0}^{k-1}C_{k-1}^{s_{1}}B^{s_{1}}\left(\sum_{l_{1}=0}^{k-1-s_{1}}(-1)^{l_{1}}A_{k-1-s_{1}}^{l_{1}}t(\log t)^{k-1-s_{1}-l_{1}}\right)\Bigg|_{\log2}^{\log x}\\
  =&\sum_{l_{1}=0}^{k-1}(-1)^{l_{1}}A_{k-1}^{l_{1}}\left(\sum_{s_{1}=0}^{k-1-l_{1}}C_{k-1-l_{1}}^{s_{1}}B^{s_{1}}(\log t)^{k-1-s_{1}-l_{1}}\right)\cdot t \Bigg|_{\log2}^{\log x}\\
  =&\sum_{l_{1}=0}^{k-1}(-1)^{l_{1}}A_{k-1}^{l_{1}}(\log t+B)^{k-1-l_{1}}\cdot t\Bigg|_{\log2}^{\log x}
\end{align*}
and the second term becomes to
\begin{align*}
  &\sum_{m=2}^{k}C_{k-1}^{m}a_{m}\sum_{s_{2}=0}^{k-1-m}C_{k-1-m}^{s_{2}}B^{s_{2}}\left(\sum_{l_{2}=0}^{k-1-m-s_{2}}(-1)^{l_{2}}
        A_{k-1-m-s_{2}}^{l_{2}}t(\log t)^{k-1-m-s_{2}-l_{2}}\right)\Bigg|_{\log2}^{\log x}\quad\quad\quad\\
  =&\sum_{l_{2}=0}^{k-1}(-1)^{l_{2}}A_{k-2}^{l_{2}}\sum_{m=2}^{k-1-l_{2}}C_{k-1-l_{2}}^{m}a_{m}\left(
       \sum_{s_{2}=0}^{k-1-l_{2}-m}C_{k-1-l_{2}-m}^{s_{2}}B^{s_{2}}(\log t)^{k-1-m-s_{2}-l_{2}}\right)\cdot t\Bigg|_{\log2}^{\log x}\\
  =&\sum_{l_{2}=0}^{k-1}(-1)^{l_{2}}A_{k-1}^{l_{2}}\sum_{m=2}^{k-1-l_{2}}C_{k-1-l_{2}}^{m}a_{m}(\log t+B)^{k-1-m-l_{2}}\cdot t\Bigg|_{\log2}^{\log x}.
\end{align*}
Hence
\begin{align*}
  &\int_{\log2}^{\log x}P_{k}^{'}(\log t)dt\\
  &=k\sum_{l=0}^{k-1}(-1)^{l}A_{k-1}^{l}\left((\log t+B)^{k-1-l}+\sum_{m=2}^{k-1-l}C_{k-1-l}^{m}a_{m}(\log t+B)^{k-1-l-m}\right)\cdot t\Bigg|_{\log2}^{\log x}\\
  &=k\sum_{l=0}^{k-1}(-1)^{l}A_{k-1}^{l}P_{k-1-l}(\log t)\cdot t\Bigg|_{\log2}^{\log x}\\
  &=k\sum_{l=0}^{k-1}(-1)^{l}A_{k-1}^{l}P_{k-1-l}(\log_{2}x)\cdot\log x
     -k\sum_{l=0}^{k-1}(-1)^{l}A_{k-1}^{l}P_{k-1-l}(\log_{2}2)\cdot \log2,
\end{align*}
and the claim follows.

If $s\geq{2}$, then by the Abel summation formula (see \cite{B\u{a}nescu}), we have
\begin{equation}\label{s2}
\begin{aligned}
   &\sum_{p_{1}\cdots{p_{k}}\leq{x}}\frac{\log^{s}(p_{1}\cdots p_{k})}{p_{1}\cdots p_{k}}\\
   &=\left(\sum_{p_{1}\cdots{p_{k}}\leq{x}}\frac{\log(p_{1}\cdots p_{k})}{p_{1}\cdots p_{k}}\right)\log^{s-1}x
       -\int_{2}^{x}\left(\sum_{p_{1}\cdots{p_{k}}\leq{t}}\frac{\log(p_{1}\cdots p_{k})}{p_{1}\cdots p_{k}}\right)(\log^{s-1}t)^{'}dt\\
   &=(f(x)+A(x))\log^{s-1}x-\int_{2}^{x}(f(t)+A(t))(\log^{s-1}t)^{'}dt\\
   &=f(x)\log^{s-1}x+A(x)\log^{s-1}x-\left(f(t)\log^{s-1}t\Big|_{2}^{x}-\int_{2}^{x}f^{'}(t)\log^{s-1}tdt\right)-\int_{2}^{x}A(t)(\log^{s-1}t)^{'}dt\\
   &=\int_{2}^{x}f^{'}(t)\log^{s-1}tdt+f(2)\log^{s-1}2+N(x),
\end{aligned}
  \end{equation}
where
$$ N(x)=A(x)\log^{s-1}x-(s-1)\int_{2}^{x}A(t)\frac{\log^{s-2}t}{t}dt.$$

Now we evaluate the right hand side of (\ref{s2}) term-wise.

For $N(x)$, by (\ref{A(x)}) we have
\begin{align*}
  N(x)&\ll\left|A(x)\right|\log^{s-1}x+(s-1)\int_{2}^{x}\left|A(t)\right|\frac{\log^{s-2}t}{t}dt\\
      &\ll \log^{s-1}x\cdot \left((\log_{2}x)^{k}\right)+(s-1)\int_{2}^{x}\frac{(\log_{2}t)^{k}\log^{s-2}t}{t}dt
      \end{align*}
and from the integral $$\int(\log t)^{k}t^{s}dt=\sum_{l=0}^{k}(-1)^{l}A_{k}^{l}\frac{t^{s+1}(\log t)^{k-l}}{(s+1)^{l+1}}+\textrm{constant},$$
we have $$\int_{2}^{x}\frac{(\log_{2}t)^{k}\log^{s-2}t}{t}dt\ll \log^{s-1}x\cdot (\log_{2}x)^{k}$$
and \begin{equation}\label{N(x)} N(x)=O\left(\log^{s-1}x\cdot(\log_{2}x)^{k}\right).\end{equation}

For $\int_{2}^{x}f^{'}(t)\log^{s-1}tdt$, first we notice that
\begin{align*}
    f^{'}(x)=&\sum_{l=0}^{k-2}(-1)^{l}A_{k}^{l+1}\left(\frac{(k-1-l)(\log_{2}x+B)^{k-2-l}}{x\log x}\right.\\
             &\quad\quad\quad+\left.\sum_{m=2}^{k-2-l}C_{k-1-l}^{m}a_{m}\frac{(k-1-l-m)(\log_{2}x+B)^{k-2-l-m}}{x\log x}\right)\log x\\
             &+\frac{1}{x}\sum_{l=0}^{k-1}(-1)^{l}A_{k}^{l+1}\Big((\log_{2}x+B)^{k-1-l}+\sum_{m=2}^{k-1-l}C_{k-1-l}^{m}a_{m}(\log_{2}x+B)^{k-1-l-m}\Big)\\
            =&\frac{1}{x}\sum_{l=1}^{k-1}(-1)^{l-1}A_{k}^{l+1}\Big((\log_{2}x+B)^{k-1-l}+\sum_{m=2}^{k-1-l}C_{k-1-l}^{m}a_{m}(\log_{2}x+B)^{k-1-l-m}\Big)\\
             &+\frac{1}{x}\sum_{l=0}^{k-1}(-1)^{l}A_{k}^{l+1}\Big((\log_{2}x+B)^{k-1-l}+\sum_{m=2}^{k-1-l}C_{k-1-l}^{m}a_{m}(\log_{2}x+B)^{k-1-l-m}\Big)\\
            =&k\cdot\frac{1}{x}\Big((\log_{2}x+B)^{k-1}+\sum_{m=2}^{k-1}C_{k-1}^{m}a_{m}(\log_{2}x+B)^{k-1-m}\Big).
\end{align*}
This implies
\begin{align*}
  \int_{2}^{x}f^{'}(t)\log^{s-1}tdt
   =&k\int_{2}^{x}\frac{(\log_{2}t+B)^{k-1}+\sum_{m=2}^{k-1}C_{k-1}^{m}a_{m}(\log_{2}t+B)^{k-1-m}}{t}\log^{s-1}tdt\\
   =&k\int_{\log2}^{\log x}\Big((\log t+B)^{k-1}+\sum_{m=2}^{k-1}C_{k-1}^{m}a_{m}(\log t+B)^{k-1-m}\Big)t^{s-1}dt\\
   =&k\int_{\log2}^{\log x}\Big(\sum_{t=0}^{k-1}C_{k-1}^{t}B^{t}(\log t)^{k-1-t}\Big.\\
    &\quad\quad\Big.+\sum_{m=2}^{k-1}C_{k-1}^{m}a_{m}\sum_{t=0}^{k-1-m}C_{k-1-m}^{t}
     B^{t}(\log t)^{k-1-m-t}\Big)t^{s-1}dt\\
   =&k\left(\sum_{t=0}^{k-1}C_{k-1}^{t}B^{t}\int_{\log2}^{\log x}(\log t)^{k-1-t}t^{s-1}dt\right.\\
    &\quad\quad\left.+\sum_{m=2}^{k-1}C_{k-1}^{m}a_{m}
     \sum_{t=0}^{k-1-m}C_{k-1-m}^{t} B^{t}\int_{\log2}^{\log x}(\log t)^{k-1-m-t}t^{s-1}dt\right).
\end{align*}
If we denote
\begin{align*}
  A_{s}=\sum_{l=0}^{k-1}(-1)^{l+1}\frac{A_{k}^{l+1}}{s^{l+1}}P_{k-1-l}(\log_{2}2)\log^{s}2,
\end{align*}
then
\begin{align*}
    &\int_{2}^{x}f^{'}(t)\log^{s-1}tdt\\
   &=k\left(\sum_{t=0}^{k-1}C_{k-1}^{t}B^{t}\sum_{l=0}^{k-1-t}(-1)^{l}A_{k-1-t}^{l}\frac{t^{s}(\log t)^{k-1-t-l}}{s^{l+1}}\right.\\
     &\quad\quad\left.+\sum_{m=2}^{k-1}C_{k-1}^{m}a_{m}
    \sum_{t=0}^{k-1-m}C_{k-1-m}^{t} B^{t}\sum_{l=0}^{k-1-m-t}(-1)^{l}A_{k-1-m-t}^{l}\frac{t^{s}(\log t)^{k-1-m-t-l}}{s^{l+1}}\right)\Bigg|_{\log2}^{\log x}\\
   &=k\left[\sum_{l=0}^{k-1}(-1)^{l}\frac{A_{k-1}^{l}}{s^{l+1}}\Big(\sum_{t=0}^{k-1-l}C_{k-1-l}^{t}
        B^{t}(\log t)^{k-1-l-t}\Big)t^{s}\right.\\
    &\quad\quad\left.+\sum_{l=0}^{k-1}(-1)^{l}\frac{A_{k-1}^{l}}{s^{l+1}}\sum_{m=2}^{k-1-l}C_{k-1-l}^{m}a_{m}\Big(\sum_{t=0}^{k-1-m-l}C_{k-1-m-l}^{t}
     B^{t}(\log t)^{k-1-m-l-t}\Big)t^{s}\right]\Bigg|_{\log2}^{\log x}\\
   &=\sum_{l=0}^{k-1}(-1)^{l}\frac{A_{k}^{l+1}}{s^{l+1}}P_{k-1-l}(\log_{2}x)\cdot\log^{s}x+A_{s}.
\end{align*}
Therefore
\begin{align*}
  \sum_{p_{1}\cdots{p_{k}}\leq{x}}\frac{\log^{s}(p_{1}\cdots p_{k})}{p_{1}\cdots p_{k}}
    =&\sum_{l=0}^{k-1}(-1)^{l}\frac{A_{k}^{l+1}}{s^{l+1}}P_{k-1-l}(\log_{2}x)\cdot \log^{s}x
     +f(2)\log^{s-1}2\\
     &+ O\left(\log^{s-1}x\cdot (\log_{2}x)^{k}\right),
\end{align*}
the proof of Theorem \ref{lem.B} is completed.

\begin{proposition}\label{prop.B}
The following evaluation holds
\begin{align*}
  B&=\sum_{p_{1}\cdots{p_{k-1}}\leq{\sqrt{x}}}\frac{1}{p_{1}\cdots p_{k-1}}P_{1}\left(\log_{2}\Big(\frac{x}{p_{1}\cdots{p_{k-1}}}\Big)\right)\\
    &=P_{1}(\log_{2}x)P_{k-1}(\log_{2}\sqrt{x})+
      \sum_{l=0}^{k-2}(-1)^{l+1}A_{k-1}^{l+1}P_{k-2-l}(\log_{2}\sqrt{x})Li_{l+2}\left(\frac{1}{2}\right)\\
      &\quad+O\left(\frac{(\log_{2}x)^{k-1}}{\log x}\right).
\end{align*}
\end{proposition}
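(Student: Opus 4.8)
The plan is to strip off the linear term of $P_1$, Taylor-expand the surviving logarithm, and recognize the resulting one-parameter family of sums as instances of Theorem~\ref{lem.B}; the polylogarithm values $Li_{l+2}(1/2)$ will then fall out of the series $\sum_{j\ge1}(1/2)^j j^{-l-2}$.

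For a product of $k-1$ primes $m=p_1\cdots p_{k-1}\le\sqrt x$ one has $0\le\tfrac{\log m}{\log x}\le\tfrac12$ and $\log_2(x/m)=\log_2 x+\log\big(1-\tfrac{\log m}{\log x}\big)$, so, since $P_1(y)=y+B$,
\[
B=(\log_2 x+B)\,V_{k-1}(\sqrt x)+S,\qquad S:=\sum_{p_1\cdots p_{k-1}\le\sqrt x}\frac{1}{p_1\cdots p_{k-1}}\log\Big(1-\frac{\log(p_1\cdots p_{k-1})}{\log x}\Big),
\]
where $V_{k-1}(t)=\sum_{p_1\cdots p_{k-1}\le t}(p_1\cdots p_{k-1})^{-1}$. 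By the inductive hypothesis (\ref{simple})--(\ref{simplified}) with $s=k-1$ and argument $\sqrt x$, $V_{k-1}(\sqrt x)=P_{k-1}(\log_2\sqrt x)+O\big((\log_2 x)^{k-2}/\log x\big)$, hence
\[
(\log_2 x+B)\,V_{k-1}(\sqrt x)=P_1(\log_2 x)\,P_{k-1}(\log_2\sqrt x)+O\big((\log_2 x)^{k-1}/\log x\big),
\]
which is the first term of the assertion.

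To handle $S$ I would expand $\log(1-u)=-\sum_{j\ge1}u^j/j$ (valid for $0\le u\le\tfrac12$) and interchange the sums — legitimate by Tonelli, since $\sum_{p_1\cdots p_{k-1}\le\sqrt x}\frac{1}{p_1\cdots p_{k-1}}\big(-\log(1-\tfrac{\log(p_1\cdots p_{k-1})}{\log x})\big)\le(\log 2)\,V_{k-1}(\sqrt x)<\infty$ — obtaining
\[
S=-\sum_{j\ge1}\frac{1}{j\,(\log x)^j}\sum_{p_1\cdots p_{k-1}\le\sqrt x}\frac{\log^j(p_1\cdots p_{k-1})}{p_1\cdots p_{k-1}}.
\]
The inner sum is the left-hand side of (\ref{main1}) with $(k-1,j,\sqrt x)$ in place of $(k,s,x)$, and $\log\sqrt x=\tfrac12\log x$; Theorem~\ref{lem.B} evaluates it as $\sum_{l=0}^{k-2}(-1)^l\tfrac{A_{k-1}^{l+1}}{j^{l+1}}P_{k-2-l}(\log_2\sqrt x)\,2^{-j}\log^j x$, plus a term $c_k(\log 2)^{j-1}$ with $c_k$ independent of $j$, plus an error $O\big(2^{-(j-1)}\log^{j-1}x\,(\log_2 x)^{k-1}\big)$. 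Dividing by $(\log x)^j$ and summing over $j\ge1$, the main part yields
\[
-\sum_{l=0}^{k-2}(-1)^l A_{k-1}^{l+1}P_{k-2-l}(\log_2\sqrt x)\sum_{j\ge1}\frac{1}{j^{l+2}2^j}=\sum_{l=0}^{k-2}(-1)^{l+1}A_{k-1}^{l+1}P_{k-2-l}(\log_2\sqrt x)\,Li_{l+2}\Big(\tfrac12\Big)
\]
by the definition of $Li_{l+2}$, whereas the $c_k$-term and the error contribute $\sum_{j\ge1}\tfrac{c_k(\log 2)^{j-1}}{j(\log x)^j}=O(1/\log x)$ and $\tfrac{(\log_2 x)^{k-1}}{\log x}\sum_{j\ge1}\tfrac{1}{j2^{j-1}}=O\big((\log_2 x)^{k-1}/\log x\big)$. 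Adding this to the contribution of $(\log_2 x+B)V_{k-1}(\sqrt x)$ gives the stated formula.

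The delicate point is the legitimacy of summing the Theorem~\ref{lem.B} expansions term by term over $j$, which requires the implied $O$-constant there to be uniform in $s=j$ (for our fixed $k$). This is visible from that proof: the error is $N(x)=A(x)\log^{s-1}x-(s-1)\int_2^x A(t)\log^{s-2}t\,\tfrac{dt}{t}$ with $A(x)=O\big((\log_2 x)^k\big)$ free of $s$, and the displayed primitive of $(\log u)^k u^{s-2}$ carries only the favorable factors $(s-1)^{-l}$, so $N(x)=O_k\big(\log^{s-1}x\,(\log_2 x)^k\big)$ uniformly in $s\ge1$; granting this, the rest is routine bookkeeping with the polynomials $P_s$ and the identity $P_n(y-\log 2)=\sum_t C_n^t(-\log 2)^t P_{n-t}(y)$. (Alternatively, one can bypass Theorem~\ref{lem.B} and evaluate $S$ directly by Abel summation against $V_{k-1}$, the substitution $t=x^v$, and (\ref{eq.int}), reaching the same answer.)
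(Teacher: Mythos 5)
Your proof is correct and follows essentially the same route as the paper's: split off $P_1$, expand $\log\bigl(1-\tfrac{\log(p_1\cdots p_{k-1})}{\log x}\bigr)$ into its power series, evaluate the inner sums $\sum_{p_1\cdots p_{k-1}\le\sqrt x}\log^{j}(p_1\cdots p_{k-1})/(p_1\cdots p_{k-1})$ via Theorem \ref{lem.B} (the paper invokes its Corollary \ref{cor2}), and resum over $j$ to produce the values $Li_{l+2}(1/2)$. Your explicit check that the implied constant in Theorem \ref{lem.B} is uniform in $s=j$ addresses a point the paper passes over silently when it interchanges the sum over $s$ with the $O$-term.
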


\begin{proof}
The simplified notation (\ref{simplified}) gives
\begin{align*}
  B&=\sum_{p_{1}\cdots{p_{k-1}}\leq{\sqrt{x}}}\frac{1}{p_{1}\cdots p_{k-1}}P_{1}\left(\log_{2}\Big(\frac{x}{p_{1}\cdots{p_{k-1}}}\Big)\right)\\
     &=\sum_{p_{1}\cdots{p_{k-1}}\leq{\sqrt{x}}}\frac{1}{p_{1}\cdots p_{k-1}}\left(\log_{2}\Big(\frac{x}{p_{1}\cdots{p_{k-1}}}\Big)+B\right)\\
     &=\sum_{p_{1}\cdots{p_{k-1}}\leq{\sqrt{x}}}\frac{1}{p_{1}\cdots p_{k-1}}\Big(\log\big(\log x-\log(p_{1}\cdots p_{k})\big)-\log_{2}x+\log_{2}x+B\Big)\\
     &=(\log_{2}x+B)\sum_{p_{1}\cdots{p_{k-1}}\leq{\sqrt{x}}}\frac{1}{p_{1}\cdots p_{k-1}}
        +\sum_{p_{1}\cdots{p_{k-1}}\leq{\sqrt{x}}}\frac{1}{p_{1}
       \cdots p_{k-1}}\log\left(1-\frac{\log(p_{1}\cdots p_{k-1})}{\log x}\right).
\end{align*}

\noindent Then from the series expansion $\log(1-x)$, we have
\begin{align*}
     B&=P_{1}(\log_{2}x)P_{k-1}(\log_{2}\sqrt{x})+
        \sum_{p_{1}\cdots{p_{k-1}}\leq{\sqrt{x}}}\frac{1}{p_{1}\cdots p_{k-1}}\left(-\sum_{s=1}^{\infty}\frac{\log^{s}(p_{1}\cdots p_{k-1})}{s(\log x)^{s}}\right)\\
        &\quad+O\left(\frac{(\log_{2}x)^{k-1}}{\log x}\right)\\
     &=P_{1}(\log_{2}x)P_{k-1}(\log_{2}\sqrt{x})
        -\sum_{s=1}^{\infty}\frac{1}{s}\left(\frac{1}{(\log x)^{s}}\sum_{p_{1}\cdots{p_{k-1}}\leq{\sqrt{x}}}\frac{\log^{s}(p_{1}\cdots p_{k-1})}{p_{1}\cdots p_{k-1}}\right)\\
        &\quad+O\left(\frac{(\log_{2}x)^{k-1}}{\log x}\right)\\
     &=P_{1}(\log_{2}x)P_{k-1}(\log_{2}\sqrt{x})+M+O\left(\frac{(\log_{2}x)^{k-1}}{\log x}\right),
\end{align*}
where
\begin{align*}
  M&=-\sum_{s=1}^{\infty}\frac{1}{s}\left(\frac{1}{(\log x)^{s}}\sum_{p_{1}\cdots{p_{k-1}}\leq{\sqrt{x}}}\frac{\log^{s}(p_{1}\cdots p_{k-1})}{p_{1}\cdots p_{k-1}}\right)\\
       &= -\sum_{s=1}^{\infty}\frac{1}{s}\frac{1}{2^{s}}\left(\sum_{l=0}^{k-2}(-1)^{l}\frac{A_{k-1}^{l+1}}{s^{l+1}}P_{k-2-l}(\log_{2}\sqrt{x})
        +O\left(\frac{(\log_{2}x)^{k-1}}{\log x}\right)\right),
\end{align*}
the last equation holds by using Corollary \ref{cor2}. Since the series $\sum_{s=1}^{\infty}\frac{1}{2^{s}\cdot s}$ is convergent, we obtain
\begin{align*}
  M&=\sum_{l=0}^{k-2}(-1)^{l+1}\frac{A_{k-1}^{l+1}}{s^{l+1}}P_{k-2-l}(\log_{2}\sqrt{x})
      \sum_{s=1}^{\infty}\frac{1}{s^{l+2}\cdot2^{s}}+O\left(\frac{(\log_{2}x)^{k-1}}{\log x}\right)\\
   &=\sum_{l=0}^{k-2}(-1)^{l+1}\frac{A_{k-1}^{l+1}}{s^{l+1}}P_{k-2-l}(\log_{2}\sqrt{x})Li_{l+2}
      \left(\frac{1}{2}\right)+O\left(\frac{(\log_{2}x)^{k-1}}{\log x}\right)
\end{align*}
and the result is established.
\end{proof}

Finally, we prove Theorem \ref{thm}.

\noindent\textbf{Proof of Theorem \ref{thm}}
By Proposition \ref{main}, we have
\begin{equation}\label{thm1}
\begin{aligned}
  \sum_{p_{1}\cdots{p_{k}}\leq x}\frac{1}{p_{1}\cdots p_{k}}=A+B-C+O\left(\frac{(\log_{2}x)^{k-1}}{\log x}\right).
\end{aligned}
\end{equation}
Denote by $a=\log2$. Then substituting the results of Propositions \ref{prop.A} and \ref{prop.B} into (\ref{thm1}), we have
\begin{align*}
  \sum_{p_{1}\cdots{p_{k}}\leq x}\frac{1}{p_{1}\cdots p_{k}}=P_{k}(\log_{2}x)+O\left(\frac{(\log_{2}x)^{k-1}}{\log x}\right),
\end{align*}
where
\begin{align*}
  P_{k}(y)=&P_{1}(y-a)P_{k-1}(y)+\sum_{c=1}^{k-1}C_{k-1}^{c}P_{k-1-c}(y)\int_{0+0}^{\frac{1}{2}}\frac{log^{c}(1-x)}{x}dx\\
            &+P_{1}(y)P_{k-1}(y-a)+\sum_{l=0}^{k-2}(-1)^{l+1}A_{k-1}^{l+1}P_{k-2-l}(y-a)Li_{l+2}\left(\frac{1}{2}\right)\\
            &-P_{1}(y-a)P_{k-1}(y-a)\\
          =&P_{1}(y-a)P_{k-1}(y)+aP_{k-1}(y-a)
            +\sum_{c=1}^{k-1}C_{k-1}^{c}P_{k-1-c}(y)
            \int_{0+0}^{\frac{1}{2}}\frac{\log^{c}(1-x)}{x}dx\\
            &+\sum_{l=1}^{k-1}(-1)^{l}A_{k-1}^{l}P_{k-1-l}(y-a)Li_{l+1}\left(\frac{1}{2}\right).
\end{align*}

Now we evaluate $P_{k}(y)$ carefully.
Notice that for any $s\leq{k-1}$, \begin{equation*}P_{s}(y-a)=\sum_{t=0}^{s}C_{s}^{t}(-1)^{t}a^{t}P_{s-t}(y)\end{equation*} and from the integral (\ref{eq.int}), we obtain
\begin{align*}
   P_{k}(y)=&P_{1}(y-a)P_{k-1}(y)+a\sum_{t=0}^{k-1}C_{k-1}^{t}(-1)^{t}a^{t}P_{k-1-t}(y)\\
            &+\sum_{t=1}^{k-1}C_{k-1}^{t}P_{k-1-t}(y)
             \left((-a)^{t+1}+(-1)^{t}t!\zeta(t+1)+(-1)^{t-1}\sum_{s=1}^{t}A_{t}^{s}a^{t-s}Li_{s+1}\left(\frac{1}{2}\right)\right)\\
            &+\sum_{t=1}^{k-1}(-1)^{t}A_{k-1}^{t}P_{k-1-t}(y-a)Li_{t+1}\left(\frac{1}{2}\right).
\end{align*}
Consider the identity given by operations on combinatorial numbers,
\begin{align*}
  &\sum_{t=1}^{k-1}(-1)^{t-1}C_{k-1}^{t}P_{k-1-t}(y)\sum_{s=1}^{t}A_{t}^{s}a^{t-s}Li_{s+1}\left(\frac{1}{2}\right)\\
 =&\sum_{s=1}^{k-1}\sum_{t=s}^{k-1}(-1)^{t-1}C_{k-1}^{t}A_{t}^{s}a^{t-s}P_{k-1-t}(y)Li_{s+1}\left(\frac{1}{2}\right)\\
 =&\sum_{s=1}^{k-1}\sum_{t=0}^{k-1-s}(-1)^{t+s-1}C_{k-1}^{t+s}A_{t+s}^{s}a^{t}P_{k-1-t-s}(y)Li_{s+1}\left(\frac{1}{2}\right)\\
 =&\sum_{s=1}^{k-1}(-1)^{s-1}A_{k-1}^{s}\left(\sum_{t=0}^{k-1-s}(-1)^{t}C_{k-1-s}^{t}a^{t}P_{k-1-t-s}(y)\right)Li_{s+1}\left(\frac{1}{2}\right)\\
 =&\sum_{s=1}^{k-1}(-1)^{s-1}A_{k-1}^{s}P_{k-1-s}(y-a)Li_{s+1}\left(\frac{1}{2}\right).
\end{align*}
Then substitution it to the above identity for $P_{k}(y)$, we obtain
\begin{align*}
   P_{k}(y)=&P_{1}(y)P_{k-1}(y)-aP_{k-1}(y)+aP_{k-1}(y)+\sum_{t=1}^{k-1}C_{k-1}^{t}(-1)^{t}a^{t+1}P_{k-1-t}(y)\\
            &+\sum_{t=1}^{k-1}C_{k-1}^{t}(-1)^{t+1}a^{t+1}P_{k-1-t}(y)+\sum_{t=1}^{k-1}C_{k-1}^{t}(-1)^{t}t!\zeta(t+1)P_{k-1-t}(y)\\
            &+\sum_{s=1}^{k-1}(-1)^{s-1}A_{k-1}^{s}P_{k-1-s}(y-a)Li_{s+1}\left(\frac{1}{2}\right)\\
            &+\sum_{t=1}^{k-1}(-1)^{t}A_{k-1}^{t}P_{k-1-t}(y-a)Li_{t+1}\left(\frac{1}{2}\right)\\
           =&P_{1}(y)P_{k-1}(y)+\sum_{t=1}^{k-1}C_{k-1}^{t}(-1)^{t}t!\zeta(t+1)P_{k-1-t}(y).
\end{align*}
Reusing the simplified notations (\ref{simplified}), we have
\begin{align*}
   P_{k}(y)=&P_{1}(y)\left(P_{1}(y)^{k-1}+\sum_{m=2}^{k-1}C_{k-1}^{m}a_{m}P_{1}(y)^{k-1-m}\right)\\
             &+\sum_{t=1}^{k-1}C_{k-1}^{t}(-1)^{t}t!\zeta(t+1)
             \left(P_{1}(y)^{k-1-t}+\sum_{m=2}^{k-1-t}C_{k-1-t}^{m}a_{m}P_{1}(y)^{k-1-t-m}\right)\\
           =&P_{1}(y)^{k}+\sum_{m=2}^{k-1}C_{k-1}^{m}a_{m}P_{1}(y)^{k-m}
             +\sum_{t=2}^{k}C_{k-1}^{t-1}(-1)^{t-1}(t-1)!\zeta(t)P_{1}(y)^{k-t}\\
             &+\sum_{t=2}^{k}C_{k-1}^{t-1}(-1)^{t-1}(t-1)!\zeta(t)\sum_{m=2}^{k-t}C_{k-t}^{m}a_{m}P_{1}(y)^{k-t-m}\\
           =&P_{1}(y)^{k}+\sum_{m=2}^{k-1}C_{k-1}^{m}a_{m}P_{1}(y)^{k-m}
             +\sum_{t=2}^{k-1}C_{k-1}^{t-1}(-1)^{t-1}(t-1)!\zeta(t)P_{1}(y)^{k-t}\\
             &+(-1)^{k-1}(k-1)!\zeta(k)+\sum_{t=2}^{k-2}(-1)^{t-1}C_{k-1}^{t-1}(t-1)!\zeta(t)a_{k-t}\\
             &+\sum_{m=4}^{k-1}\sum_{t=2}^{m-2}(-1)^{t-1}C_{k-1}^{t-1}(t-1)!\zeta(t)C_{k-t}^{m-t}a_{m-t}P_{1}(y)^{k-m}.
 \end{align*}
 Thus from the definition of the sequence $\{a_{n}\}$ (\ref{an}), we get
 \begin{align*}
    P_{k}(y)=&P_{1}(y)^{k}+\sum_{m=2}^{3}\left(C_{k-1}^{m}a_{m}+C_{k-1}^{m-1}(-1)^{m-1}(m-1)!\zeta(m)\right)P_{1}(y)^{k-m}\\
             &+\sum_{m=4}^{k-1}\Bigg(C_{k-1}^{m}a_{m}+C_{k-1}^{m-1}(-1)^{m-1}(m-1)!\zeta(m)\\
              &\quad\quad\quad\left.+\sum_{t=2}^{m-2}(-1)^{t-1}C_{k-1}^{t-1}(t-1)!\zeta(t)C_{k-t}^{m-t}a_{m-t}\right)P_{1}(y)^{k-m}\\
             &+\sum_{t=1}^{k-3}C_{k-1}^{t}(-1)^{t}t!\zeta(t+1)a_{k-1-t}+(-1)^{k-1}(k-1)!\zeta(k).
 \end{align*}
Then by using the equation $$ \sum_{t=2}^{m-2}(-1)^{t-1}C_{k-1}^{t-1}(t-1)!\zeta(t)C_{k-t}^{m-t}a_{m-t}
                         =\sum_{t=1}^{m-3}(-1)^{t}C_{k-1}^{t}t!\zeta(t+1)C_{k-t-1}^{m-t-1}a_{m-t-1},$$
 we obtain
 \begin{align*}
            P_{k}(y)=&P_{1}(y)^{k}+\sum_{m=2}^{3}\left(C_{k-1}^{m}a_{m}+C_{k-1}^{m-1}a_{m}\right)P_{1}(y)^{k-m}\\
              &+\sum_{m=4}^{k-1}\Bigg(C_{k-1}^{m}a_{m}+C_{k-1}^{m-1}(-1)^{m-1}(m-1)!\zeta(m)\\
              &\quad\quad\quad\left.+C_{k-1}^{m-1}\sum_{t=1}^{m-3}(-1)^{t}C_{m-1}^{t}t!\zeta(t+1)a_{m-1-t}\right)P_{1}(y)^{k-m}+a_{k}\\
           =&P_{1}(y)^{k}+\sum_{m=2}^{k-1}C_{k}^{m}a_{m}P_{1}(y)^{k-m}+a_{k},
\end{align*}
since $C_{k-1}^{m}+C_{k-1}^{m-1}=C_{k}^{m}$. This completes the proof of Theorem \ref{thm}.

\section*{Acknowledgement} We would like to thank Professor M. A. Korolev for his interested in this paper and for sending his work to us.

\end{document}